\DeclareMathAlphabet{\mathpzc}{OT1}{pzc}{m}{it}
\mathchardef\mhyphen="2D
\begin{document}
%
%
\pagestyle{headings}  
%
%
\title{Pairings from a tensor product point of view}
%
\titlerunning{Pairings from a tensor product point of view}  
%
\author{Nadia El Mrabet\inst{1}\thanks{One of the authors, Nadia El Mrabet, wishes to acknowledge support from French project ANR INS 2012 SYMPATIC.} and Laurent Poinsot\inst{2}}
\authorrunning{Nadia El Mrabet and Laurent Poinsot} 
%
%
\institute{Universit\'e Paris 8, LIASD, France,\\
\email{elmrabet@ai.univ-paris8.fr},\\ \texttt{http://www.ai.univ-paris8.fr/\~{}elmrabet/}
\and
Universit\'e Paris 13, Sorbonne Paris Cit\'e, LIPN, CNRS (UMR 7030), France,\\
\email{laurent.poinsot@lipn.univ-paris13.fr},\\ 
\texttt{http://lipn.univ-paris13.fr/\~{}poinsot/}
}

\maketitle              

\begin{abstract}
Pairings are particular bilinear maps, and as any bilinear maps they factor through the tensor product as group homomorphisms. Besides, nothing seems to prevent us to construct pairings on other abelian groups than elliptic curves or more general abelian varieties. The point of view adopted in this contribution is based on these two observations. Thus we present an elliptic curve free study of pairings which is essentially based on tensor products of abelian groups (or modules). Tensor products of abelian groups are even explicitly computed under finiteness conditions. We reveal that the existence of pairings depends on the non-degeneracy of some universal bilinear map, called the canonical bilinear map. In particular it is shown that the construction of a pairing on $A\times A$ is always possible whatever a finite abelian group $A$ is. We also propose some new constructions of pairings, one of them being based on the notion of group duality which is related to the concept of non-degeneracy. 
\keywords{Pairing, tensor product, finite abelian group, module, duality.}\\
\quad\\
\textbf{Mathematics Subject Classification (2010) 15A69, 11E39}
\end{abstract}

\section{Introduction}

A bilinear map is a function of two variables that belong to two finite abelian groups, and with values in another abelian group, such that when fixing one of its variable the map thus obtained is a homomorphism of groups. Bilinear maps were originally introduced in cryptography in order to solve the discrete logarithm problem~\cite{MOV1993}. Due to bilinearity it is possible to transport this problem from a group for which it is assumed to be difficult to another one where the problem becomes easier. Afterwards, bilinear maps were used to define tripartite Diffie-Hellman key exchange protocol~\cite{Joux}. In these two situations, the bilinear maps under consideration are assumed to be non-degenerate, and are called pairings. For such a map $f\colon A\times B\rightarrow C$, this means that apart from the identity element of $A$ (respectively, $B$),  there is no members of $A$ (respectively, of $B$) that annihilate every member of $B$ (respectively, of $A$). For these kinds of use the groups $A,B,C$ are cyclic groups. Many pairings considered in the literature are naturally associated to some objects arising in algebraic (projective) geometry such as elliptic curves and more generally abelian varieties. For a long time pairings were variants of the Weil~\cite{Silverman} and Tate~\cite{Ruck} pairings over genus $1$ or $2$ curves over finite fields. More recently pairings over more general abelian varieties have been proposed~\cite{Lubicz} and even based on dot-products~\cite{Okamoto1} for homomorphic encryption. 

More attention was given to pairings over elliptic curves for at least two reasons. First of all, it seems that the security level of such pairings with respect to the discrete logarithm problem and to pairing inversions is high (see for instance~\cite{Boxall}). Secondly, these pairings may be computed rather efficiently (with help of an efficient finite field arithmetic~\cite{Bajard,ElMrabet} or by optimized versions of Miller's algorithm~\cite{Hess,Vercauteren}).  Apart from these two important cryptographic issues, pairings are bilinear maps between finite abelian groups, and as any bilinear map, a pairing descends to the tensor product of abelian groups as a  usual group homomorphism. It seems rather natural to study pairings through the notion of tensor product, and it is the point of view adopted in this contribution. More precisely, we study, and construct, bilinear maps between finite abelian groups (and more generally between modules over some fixed ring) seen as homomorphisms from a tensor product to an abelian group (or a module). This provides an elliptic curve free presentation of pairings between abstract groups. Not all our results are difficult, some of them are folklore, and lot of them may even be  qualified as simple for group-theorists, but we think that one of the main worth of this work is to provide a unified treatment of pairings in an abstract setting. This approach is quite natural since many properties of pairings are independent from algebraic geometry. Because we have chosen to work at the abstract groups level, we do not deal with the cryptographic issues of efficiency and security. We believe that these gaps are balanced by the results stated in this contribution, and our rather general approach to pairings. We also believe that this work may serve as a basis for new constructions of cryptographically relevant pairings on other group structures than elliptic curves (see for instance~\cite{Lubicz,Okamoto1}). 

The remainder of this contribution is organized as follows: Section~\ref{basic} fixes the general notations, provides basic definitions about bilinear maps and pairings, and contains a brief overview on pairing-based cryptography. Section~\ref{tensorprod} is about the tensor product of groups and modules themselves, of which it provides a number of useful properties. Section~\ref{finiteAbgroups} is entirely devoted to the tensor product of finite abelian groups: the rules to compute any such tensor product are presented. It also deals with the canonical bilinear map (which is canonically attached to a tensor product) and the fact that non-degeneracy of a bilinear map depends of that of a canonical bilinear map.  Section~\ref{constructions} contains several constructions of pairings. Some properties about known pairings are also recovered. 

\section{An introduction to pairing-based cryptography}\label{basic}

\subsection{Some notations and definitions}

Before introducing the notion of pairings and their use in cryptography, let us begin with some notations, useful hereafter in this contribution.

Let $f\colon X\times Y\rightarrow Z$ be any set-theoretic map. For any $x\in X$, we define the map $f(x,\cdot)\colon X\rightarrow Z$ by $y\mapsto f(x,y)$, and symmetrically, for any $y\in Y$ is defined $f(\cdot,y)\colon Y\rightarrow Z$ by $x\mapsto f(x,y)$.  
The identity element of a group $G$ is denoted either by $1_G$ or by $0_G$ (or $1$ or $0$) whether $G$ is given in multiplicative or additive notation. Let $G,H,K$ be three groups (abelian or not). A map $f\colon G\times H\rightarrow K$ is said to be \emph{bilinear} if for every $g\in G$, and every $h\in H$, the maps $f(g,\cdot)\colon H\rightarrow K$ and $f(\cdot,h)\colon G\rightarrow K$ are homomorphisms of groups. The set of all bilinear maps from $G\times H$ to $K$ is then denoted by $\mathpzc{Bil}(G\times H,K)$. Actually, this notion may be defined in another setting, that of modules over some commutative ring. In this contribution, $R$ always denotes a commutative ring with a unit $1_R$. Given three $R$-modules, $A,B,C$, we say that a map $f\colon A\times B\rightarrow C$ is \emph{$R$-bilinear} whenever for every $a\in A$ and every $b\in B$, the maps $f(a,\cdot)\colon B\rightarrow C$ and $f(\cdot,b)\colon A\rightarrow C$ are $R$-linear. When $R=\mathbb{Z}$, then $\mathbb{Z}$-bilinear maps are exactly bilinear maps between abelian groups. In what follows, the set of all $R$-bilinear maps from $A\times B$ to $C$ is denoted by $\mathpzc{Bil}_R(A\times B,C)$. Moreover, we denote $\mathpzc{Bil}_{\mathbb{Z}}(A\times B,C)$ simply by $\mathpzc{Bil}(A\times B,C)$ since when $A,B,C$ are abelian groups both notions of bilinearity coincide. Continuing with notations, if $G,H$ are groups, then $\mathpzc{Hom}(G,H)$ is the set of all group homomorphisms from $G$ to $H$, and if $A,B$ are two $R$-modules, then $\mathpzc{Hom}_R(A,B)$ is the set of all $R$-linear maps from $A$ to $B$. Again if $A,B$ are abelian groups, then $\mathpzc{Hom}(A,B)=\mathpzc{Hom}_{\mathbb{Z}}(A,B)$.
\begin{example}
Let $A$ be an abelian group. Let $R^{\star}$ be the group of invertible elements of $R$. A bilinear map $f\colon A\times A\rightarrow R^{\star}$  is called a \emph{bicharacter}~\cite{Ree}. When furthermore $f(a,b)f(b,a)=1$ and $f(a,a)=\pm 1$ for every $a,b\in A$, $f$ is said to be a \emph{commutation factor}~\cite{Scheunert}. Such commutation factors are used to define color Lie superalgebras~\cite{Bah}. 
\end{example}

One of the main feature of a pairing (the definition of which is recalled hereafter) is the notion of non-degeneracy. Let $G,H,K$ be three groups and $A,B,C$ be three $R$-modules. Let $f\in \mathpzc{Bil}(G\times H,K)$ (respectively, $f\in \mathpzc{Bil}_R(A\times B,C)$). The map $f$ is said to be \emph{left non-degenerate} if the map $g\in G\mapsto f(g,\cdot)$ (respectively, $a\in A\mapsto f(a,\cdot)$) is one-to-one. In other terms this means that  if for every $h\in H$ (respectively, every $b\in B$), $f(g,h)=1_K$ (respectively, $f(a,b)=0_C$), then $g=1_G$ (respectively, $a=0_A$). The notions of \emph{right non-degeneracy} are the evident symmetric ones, while we say that a bilinear map $f$ is \emph{non-degenerate} whenever it is both left and right non-degenerate. The map $f$ is said to be \emph{degenerate} if it is not non-degenerate. In its original form, a \emph{pairing} is a non-degenerate bilinear map between finite abelian groups. For our purpose the definition of a pairing is somewhat extended to allow pairings between non-abelian groups or $R$-modules. In brief, a \emph{pairing} is a non-degenerate map  $f\in \mathpzc{Bil}(G\times H,K)$ (respectively, $f\in\mathpzc{Bil}_R(A\times B,C)$) where $G,H,K$ are groups (respectively, $A,B,C$ are $R$-modules). In particular, there is no size issue in the definition of a pairing although our examples will be given under finiteness assumptions. 
We also sometimes use the traditional  ``bracket'' notation $\langle\cdot\mid\cdot\rangle$ to denote a pairing. 
\begin{example}\label{ex-abelian-central-quotient}
Let $1\rightarrow A\rightarrow G \rightarrow B\rightarrow 1$ be a short exact sequence of groups, where $A, B$ are abelian groups, and $A$ lies in the center $Z(G)$ of $G$ (\emph{i.e.}, $G$ is a central extension of abelian groups). Let $[g,h]=ghg^{-1}h^{-1}$ be the \emph{commutator} of $g,h\in G$.  According to~\cite{Baer}, $[\cdot,\cdot]$ descends to the quotient as a bilinear map $[\cdot,\cdot]\colon B\times B\rightarrow A$. Moreover it is alternating (\emph{i.e.}, $[x,x]=1$ for every $x\in B$). Finally, it is non-degenerate if, and only if, $A=Z(G)$, so that we obtain a pairing $[\cdot,\cdot]\colon G/Z(G)\times G/Z(G)\rightarrow Z(G)$ (whenever $G/Z(G)$ is abelian).
\end{example}

\subsection{Background on pairing-based cryptography}

We recall here the basic facts and definitions of pairings over elliptic curves.   Let $r$ be a prime integer, $A,B,C$  be three abelian groups of order $p$. A pairing is a bilinear and non-degenerate map $f \colon  A \times B \rightarrow C$. We briefly present the most frequent choices for $A$, $B$ and $C$ in pairing-based cryptography. 
Let $E$ be an elliptic curve over the finite field $\mathbb{F}_q$ of characteristic $p$. The integer $r$ is chosen to be a prime divisor of $|E(\mathbb{F}_q)|$, co-prime with $p$. A pairing is usually defined over the points of $r$-torsion of $E$: $E[r] = \{\, P \in E(\overline{\mathbb{F}_q})\colon rP=P_{\infty} \,\}$, where $P_{\infty}$ is the point at infinity of the elliptic curve. We know that $ E[r] \cong \mathbb{Z}/{r\mathbb{Z}} \times \mathbb{Z}/{r\mathbb{Z}}$ \cite[Chap III Cor. 6.4]{Silverman}. The embedding degree $k$ of $E$ relatively to $r$ is the smallest integer such that $r$ divides $(q^k-1)$. A result of Balasubramanian and Koblitz~\cite{BSS2005} ensures that, when $k>1$, all the points of $E[r]$ are rational over the extension $\mathbb{F}_{q^k}$ of degree $k$, \emph{i.e.}, $E[r]=E(\mathbb{F}_{q^k})$. The group $A$ is then the subgroup generated by a point $P \in E(\mathbb{F}_q)$ of order $r$. The subgroup $B$ is chosen as another subgroup of order $r$ of $E[r]$, a popular choice is the subgroup generated by a point $Q$ of order $r$ over $E(\mathbb{F}_{q^k})$, such that $\Pi(Q)=qQ$, where $\Pi$ represents the Frobenius endormorphism over $\mathbb{F}_q$. Finally, the group $C$ is the unique subgroup of order $r$ of $\mathbb{F}_{q^k}^{*}$ (it exists and is unique because $r$ divides $(q^k-1)$ and $\mathbb{F}_{q^k}^{*}$ is a cyclic group).  
This choice of subgroups may be seen as the restriction to $A \times B$ of the Weil pairing on $E[r] \times E[r]$, or the Tate paring, or one of its variant (reduced Tate, Ate, twisted Ate, optimal pairing or pairing lattices). The Miller algorithm is used to computed all these pairings.

The original objective of pairings in cryptography was to solve the discrete logarithm problem. The pairings shift the discrete logarithm problem from a subgroup over an elliptic curve to a discrete logarithm problem  over a finite field. The interest is that the discrete logarithm problem is easier on finite fields compared to elliptic curves~\cite{MOV1993}. Later, the pairings were used to compose the tripartite Diffie-Hellman key exchange~\cite{Joux}. It was a simplification of the Diffie-Hellman key construction between three entities. Nowadays, pairings are used for several protocols such as identity based cryptography~\cite{BonehF01} or short signature schemes~\cite{JONE2008}. The security of pairing-based cryptography lays on the discrete logarithm problem over the three groups $A$, $B$ and $C$~\cite{Boxall}.

\section{Tensor product of groups (and modules)}\label{tensorprod}

The notions of bilinear maps and tensor product are closely related as it is explained hereafter, and this relation is exploited in section~\ref{constructions} to construct new pairings on finite abelian groups. In brief, every bilinear map factors through a quotient group -- the tensor product -- as a linear map. The original bilinear map is recovered by composing this linear map with a  ``universal'' bilinear map. Therefore the study of bilinear maps reduces to that of a unique (and universal) bilinear map and of those linear maps which are defined on a particular kind of groups (or $R$-modules), namely the tensor product. In this section are recalled the constructions of the tensor product of groups and modules together with some of their basic properties. We also explain the reason why it is somewhat useless to define bilinear maps (or pairings) on non-abelian groups. Other properties of bilinear maps in the setting of finite abelian groups are presented in section~\ref{finiteAbgroups}.   

\subsection{Free (commutative) group and abelianization}

The basic notions recalled in this subsection may be found for instance in~\cite{BouAlg}. 

Let $G$ be a group. For any elements $g,h\in G$, the \emph{commutator} of $g$ and $h$ is $[g,h]=g^{-1}h^{-1}gh$ (see example~\ref{ex-abelian-central-quotient}). The \emph{derived subgroup} $[G,G]$ is generated by all the commutators and it turns to be a normal subgroup of $G$. It is even the smallest normal subgroup such that the quotient group of $G$ by this subgroup is abelian.Thus the quotient group $G/[G,G]$, denoted by $\mathpzc{Ab}(G)$, is an abelian group, called the \emph{abelianization of $G$}. It satisfies the following property: let $A$ be an abelian group, and $f\colon G\rightarrow A$ be a homomorphism of groups, then there is a unique homomorphism of groups $g\colon \mathpzc{Ab}(G)\rightarrow A$ such that $g\circ\pi=f$, where $\pi$ denotes the natural epimorphism $G\rightarrow \mathpzc{Ab}(G)$. 

Let $X$ be a set. There exists a way to construct a group $F(X)$, called the \emph{free group over  $X$}, that contains $X$,  and which is the solution\footnote{Actually a solution of a  universal problem  is only unique up to a unique isomorphism in some category, see~\cite{MacLane}.} of the following ``universal problem'': for any group $G$ and any set-theoretic map $f\colon X\rightarrow G$, there exists a unique homomorphism of groups $g\colon F(X)\rightarrow G$ such that $g(x)=f(x)$ for every $x\in X$. The construction is made as follows: for each $x\in X$, we introduce a new symbol, say $\overline{x}$, and we let $\overline{X}$ denote the totality of these symbols. Then, we consider the free monoid $(X\cup \overline{X})^*$ over $X\cup \overline{X}$. It consists of all  words (including the empty word $\epsilon$), \emph{i.e.},  finite sequences of elements of $X\cup \overline{X}$. The composition of words is the obvious one (concatenation), and $\epsilon$ acts as the identity. Finally, let $\cong$ be the least congruence of $(X\cup \overline{X})^*$ containing $\{\, (x\overline{x},\epsilon)\colon x\in X\,\}\cup\{\, (\overline{x}x,\epsilon)\colon x\in X\,\}$ (see~\cite{Cliff}). It turns that the quotient monoid $(X\cup \overline{X})^*/\cong$ (also known as the \emph{Grothendieck group completion} of $(X\cup  \overline{X})^*$) is actually a group which is precisely the free group $F(X)$. 

There also exists a similar construction for abelian groups, and more generally for modules. Recall that  $R$ is a commutative ring with a unit $1_R$. An element $f\in R^X$ is said to be \emph{finitely supported} whenever the set of all $x\in X$ such that $f(x)\not=0$ is finite.  For instance, for each $x\in X$, the map $\delta_x\in R^X$ that vanishes at all $y\not=x$, and such that $\delta_x(x)=1_R$ is finitely supported. The set of all such functions is denoted by $R^{(X)}$. It is a free $R$-module with basis $X$ (under identification of $\delta_x$ with $x$ for each $x\in X$). In particular, for $R=\mathbb{Z}$, we obtain the \emph{free commutative group} $\mathbb{Z}^{(X)}$ on $X$. As it is expected, $\mathpzc{Ab}(F(X))\cong \mathbb{Z}^{(X)}$ (isomorphic as groups). 

\subsection{Tensor product: construction and properties}

We are now in position to introduce the tensor product of groups and $R$-modules. 
Let $G,H$ be two groups (in multiplicative notation), and let $N$ be the normal subgroup of $F(G\times H)$ generated by the elements $(gg^{\prime},h)(g,h)^{-1}(g^{\prime},h)^{-1}$ and $(g,hh^{\prime})(g,h)^{-1}(g,h^{\prime})^{-1}$ for all $g,g^{\prime}\in G$, $h,h^{\prime}\in H$. The quotient group $F(G\times H)/N$ is denoted by $G\otimes H$. We denote by $g\otimes h$ the image of $(g,h)\in G\times H$ in $G\otimes H$ and this clearly defines a bilinear map from $G\times H$ to $G\otimes H$ called the \emph{canonical bilinear map}. The group $G\otimes H$ also satisfies a universal property: for every group $K$ and every bilinear map $f\colon G\times H\rightarrow K$, there exists a unique homomorphism of groups $f^{\prime}\colon G\otimes H\rightarrow K$ such that $f^{\prime}(g\otimes h)=f(g,h)$ for every $g\in G$ and $h\in H$. 
\begin{lemma}\label{tensor-is-commutative}
The image of $\otimes\colon G\times H\rightarrow G\otimes H$ generates $G\otimes H$, the group $G\otimes H$ is abelian, and $\mathpzc{Ab}(G)\otimes \mathpzc{Ab}(H)\cong G\otimes H$ (in particular, $G\otimes H$ is an abelian group). 
\end{lemma}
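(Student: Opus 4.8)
The plan is to prove the three assertions in the order stated, since each one feeds into the next. The first claim—that the image of $\otimes$ generates $G\otimes H$—should follow almost immediately from the construction. Recall that $G\otimes H=F(G\times H)/N$ and that $F(G\times H)$ is generated as a group by the image of the set $G\times H$. Under the natural epimorphism $\pi\colon F(G\times H)\rightarrow G\otimes H$, the generators $(g,h)$ map to $g\otimes h$, and since a surjective homomorphism carries a generating set onto a generating set, the elements $g\otimes h$ generate $G\otimes H$. I would state this in one or two sentences.

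Next I would show $G\otimes H$ is abelian. The key is to exploit the defining relations in $N$ to prove that any two generators $g\otimes h$ and $g'\otimes h'$ commute; once the generators pairwise commute, the whole group is abelian because it is generated by them (using the first claim). The cleanest route is to compute $(gg')\otimes h$ and $g\otimes(hh')$ two different ways. From the bilinearity relations built into $N$ we have $(gg')\otimes h=(g\otimes h)(g'\otimes h)$ and $g\otimes(hh')=(g\otimes h)(g\otimes h')$. The standard trick is to expand an expression such as $(gg')\otimes(hh')$ by first splitting on the left factor and then on the right, versus first splitting on the right factor and then on the left; comparing the two expansions forces a commutation identity among the four terms $g\otimes h$, $g\otimes h'$, $g'\otimes h$, $g'\otimes h'$. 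Specializing (for instance taking some arguments to be identity elements, and using that $g\otimes 1_H=1=1_G\otimes h$, which itself follows from the bilinearity relations) should isolate the bare commutativity $(g\otimes h)(g'\otimes h')=(g'\otimes h')(g\otimes h)$. I expect this bookkeeping to be the main obstacle: it is routine but must be arranged carefully so that the cancellations genuinely yield commutativity of arbitrary pairs of generators rather than some weaker partial relation.

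Finally, for the isomorphism $\mathpzc{Ab}(G)\otimes\mathpzc{Ab}(H)\cong G\otimes H$, I would argue by the universal properties rather than by manipulating elements. Since $G\otimes H$ is abelian, the canonical bilinear map $\otimes\colon G\times H\rightarrow G\otimes H$ is a bilinear map into an abelian group, and each partial map factors through the abelianization by the universal property of $\mathpzc{Ab}$ recalled above; this produces a well-defined bilinear map $\mathpzc{Ab}(G)\times\mathpzc{Ab}(H)\rightarrow G\otimes H$, hence by the universal property of the tensor product a homomorphism $\mathpzc{Ab}(G)\otimes\mathpzc{Ab}(H)\rightarrow G\otimes H$. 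Conversely, the composite $G\times H\rightarrow\mathpzc{Ab}(G)\times\mathpzc{Ab}(H)\xrightarrow{\otimes}\mathpzc{Ab}(G)\otimes\mathpzc{Ab}(H)$ is bilinear, so it induces a homomorphism $G\otimes H\rightarrow\mathpzc{Ab}(G)\otimes\mathpzc{Ab}(H)$. I would then check that these two homomorphisms are mutually inverse by verifying they act as the identity on the respective generating sets of the form $g\otimes h$ (which suffices by the first claim), thereby establishing the isomorphism. The parenthetical remark that $G\otimes H$ is abelian is already the second assertion, so nothing further is needed there.
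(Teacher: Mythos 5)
Your proposal is correct and follows essentially the same route as the paper: generation of $G\otimes H$ by the image of $\otimes$ via surjectivity from $F(G\times H)$, the double expansion of $(gg')\otimes(hh')$ to force commutation of elementary tensors, and the universal properties of $\mathpzc{Ab}$ and $\otimes$ for the isomorphism (the paper packages this last step as a bijection $\mathpzc{Bil}(G\times H,K)\cong\mathpzc{Bil}(\mathpzc{Ab}(G)\times\mathpzc{Ab}(H),K)$ natural in $K$, whereas you build the two mutually inverse homomorphisms explicitly; the content is the same). One reassurance on the step you flagged: no specialization at identity elements is needed, since cancelling $g\otimes h$ on the left and $g'\otimes h'$ on the right of the two expansions already yields $(g\otimes h')(g'\otimes h)=(g'\otimes h)(g\otimes h')$ for independent, arbitrary $g,g',h,h'$, which is exactly commutativity of arbitrary pairs of generators.
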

\begin{proof}
The set $G\times H$ generates the free group $F(G\times H)$, and the natural map $F(G\times H)\rightarrow G\otimes H$ is onto. Then, the group $G\otimes H$ is generated by the image of $G\times H$. Let $f\colon G\times H\rightarrow K$ be a bilinear map, where $K$ is another group (say in additive notation even if it is not assumed to be commutative).  Let $g,g^{\prime}\in G$, $h,h^{\prime}\in H$. We have $f(g,h)+f(g,h^{\prime})+f(g^{\prime},h)+f(g^{\prime},h^{\prime})=f(g,hh^{\prime})+f(g^{\prime},hh^{\prime})=f(gg^{\prime},hh^{\prime})=f(gg^{\prime},h)+f(gg^{\prime},h^{\prime})=f(g,h)+f(g^{\prime},h)+f(g,h^{\prime})+f(g^{\prime},h^{\prime})$ so that $f(g,h^{\prime})+f(g^{\prime},h)=f(g^{\prime},h)+f(g,h^{\prime})$. Thus any two elements of the image of $f$ commute, so the image of $f$ generates a commutative subgroup of $K$. This proves that $G\otimes H$ is abelian. Let $\gamma\colon G\times H\rightarrow \mathpzc{Ab}(G)\times \mathpzc{Ab}(H)$ be the canonical map which is onto. It is clear that if $f\in \mathpzc{Bil}(G\times H,K)$, then $f\circ \gamma\in \mathpzc{Bil}(\mathpzc{Ab}(G)\times \mathpzc{Ab}(H),K)$. We thus define a map $\Psi\colon \mathpzc{Bil}(G\times H,K)\rightarrow \mathpzc{Bil}(\mathpzc{Ab}(G)\times \mathpzc{Ab}(H),K)$ by $\Psi(f)=f\circ \gamma$. It turns that it is one-to-one (since $\gamma$ is onto). Because the image of $f\in \mathpzc{Bil}(G\times H,K)$ generates an abelian subgroup in $K$, for a fixed $h\in H$ the kernel of the homomorphism $f(\cdot,h)\colon g\in G\rightarrow f(g,h)\in K$ contains $[G,G]$ in such a way that $f(g,h)=f(g^{\prime},h)$ for every $g^{\prime}g^{-1}\in [G,G]$. The same holds for $f(g,\cdot)\colon h\in H\mapsto f(g,h)\in K$ for all fixed $g\in G$. This implies that $f=f^{\prime}\circ\gamma$ for some $f^{\prime}\colon \mathpzc{Ab}(G)\times\mathpzc{Ab}(H)\rightarrow K$. Because $\gamma$ is onto, it can be checked that $f^{\prime}$ is bilinear. It follows that $\Psi$ is a bijection, and it is even natural in $K$ (see~\cite{MacLane}). The last statement then follows from usual category theoretic arguments. \qed
\end{proof}
It follows from lemma~\ref{tensor-is-commutative} that it is unnecessary to consider tensor product for non-abelian groups. This is the reason why bilinear maps are defined on abelian groups. 

More generally, it is also possible to define the tensor product of $R$-modules. Let $A,B$ be two $R$-modules (in additive notation), and let $C$ be the submodule of $R^{(A\times B)}$ generated by the elements  $(a+a^{\prime},b)-(a,b)-(a^{\prime},b)$, $(a,b+b^{\prime})-(a,b)-(a,b^{\prime})$, $(\alpha a,b)-\alpha(a,b)$, $(a,\alpha b)-\alpha(a,b)$ for all $a,a^{\prime}\in A$, $b,b^{\prime}\in B$, $\alpha\in R$. Then, the quotient $R$-module $R^{(A\times B)}/C$  is called the \emph{tensor product} of $A$ and $B$, and is denoted by $A\otimes_R B$. It is the solution of the following universal problem: let $D$ be a $R$-module, and let $f\in \mathpzc{Bil}_R(A\times B,D)$. Then, there exists a unique $R$-linear map $g\colon A\otimes_R B\rightarrow D$ such that $g(a\otimes b)=f(a,b)$ for all $a\in A$, $b\in B$ (where $\otimes \colon A\times B\rightarrow A\otimes_R B$ is the restriction to $A\times B$ of the canonical epimorphism from $R^{(A\times B)}$ to $A\otimes_R B$, and it is actually a $R$-bilinear map also called the \emph{canonical bilinear map}). 
\begin{remark}
Taking $R$ to be $\mathbb{Z}$, then we recover the tensor product of abelian groups and it follows that $\mathpzc{Ab}(G)\otimes_{\mathbb{Z}}\mathpzc{Ab}(H)\cong G\otimes H\cong \mathpzc{Ab}(G)\otimes\mathpzc{Ab}(H)$ for every groups $G,H$. Moreover the maps $(a,b)\in A\times B\mapsto a\otimes b\in A\otimes B$ and $(a,b)\in A\times B\mapsto a\otimes b\in A\otimes_{\mathbb{Z}} B$ are also (essentially) the same, where $A,B$ are abelian groups. 
\end{remark}
In what follows, if $A,B$ are two abelian groups, then $A\otimes B$ stands for $A\otimes_{\mathbb{Z}}B$ (according to the above remark there is no confusion).

It is clear by construction that $A\otimes_R B$ is spanned as a $R$-module by $a\otimes b$ where $(a,b)\in A\times B$. Therefore any element of $A\otimes_R B$ is given as a finite sum $\alpha_1(a_1\otimes b_1)+\cdots+\alpha_n(a_n\otimes b_n)$, $\alpha_i\in R$, $a_i\in A$, $b_i\in B$, $i=1,\cdots,n$. Such elements are referred to as \emph{tensors} while generating elements of the form $a\otimes b$ are called \emph{elementary} (or \emph{basic}) \emph{tensors}.

Other properties of the tensor product are recalled below. The first result is given without proof since it is easy. The proofs of the two other may be found for instance in~\cite{BouAlg}. 
\begin{lemma}\label{spanning}
Let $A$ and $B$ be two $R$-modules with respective spanning sets $S$ and $T$. Then, $A\otimes_R B$ is generated as a $R$-module by the basic tensors $s\otimes t$, $s\in S$, $t\in T$.
\end{lemma}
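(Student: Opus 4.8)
The plan is to leverage the fact, already noted just before the statement, that $A\otimes_R B$ is spanned as an $R$-module by the elementary tensors $a\otimes b$ with $(a,b)\in A\times B$. Thus it suffices to show that every elementary tensor $a\otimes b$ lies in the submodule $M$ generated by the restricted family $\{\,s\otimes t : s\in S,\ t\in T\,\}$; once this is established, $M$ contains a spanning set of $A\otimes_R B$ and therefore equals $A\otimes_R B$, which is exactly the assertion.

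To carry this out, I would fix an arbitrary elementary tensor $a\otimes b$ and use the hypotheses that $S$ spans $A$ and $T$ spans $B$ to write $a=\sum_{i=1}^{m}\alpha_i s_i$ and $b=\sum_{j=1}^{n}\beta_j t_j$ as finite $R$-linear combinations, with $\alpha_i,\beta_j\in R$, $s_i\in S$, $t_j\in T$. The decisive step is then to invoke the $R$-bilinearity of the canonical map $\otimes\colon A\times B\to A\otimes_R B$: expanding the product of the two finite sums term by term yields
\[
a\otimes b=\Bigl(\sum_{i=1}^{m}\alpha_i s_i\Bigr)\otimes\Bigl(\sum_{j=1}^{n}\beta_j t_j\Bigr)=\sum_{i=1}^{m}\sum_{j=1}^{n}\alpha_i\beta_j\,(s_i\otimes t_j),
\]
which exhibits $a\otimes b$ as an $R$-linear combination of basic tensors of the required form, hence as an element of $M$.

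I do not expect any genuine obstacle here: the argument is a direct computation, and the result is essentially a consequence of bilinearity together with the fact that elementary tensors span. The only point demanding a small amount of care is the double expansion displayed above, where one must apply $R$-linearity in each argument \emph{separately}---first distributing over the sum in the first slot and pulling out each scalar $\alpha_i$, then doing the same in the second slot for each fixed $s_i$---so as to arrive at the double sum with coefficients $\alpha_i\beta_j$. Since $S$ and $T$ are arbitrary spanning sets, this completes the proof.
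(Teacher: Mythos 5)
Your proof is correct and is exactly the standard argument the paper has in mind (the paper states this lemma without proof, declaring it easy): expand $a$ and $b$ over the spanning sets and use the $R$-bilinearity of the canonical map to reduce every elementary tensor, and hence all of $A\otimes_R B$, to the submodule generated by the $s\otimes t$. Nothing is missing.
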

\begin{lemma}\label{symmetric}
Let $A,B$ be two $R$-modules. There is a unique isomorphism of $R$-modules $\sigma\colon A\otimes_R B\cong B\otimes_R A$ such that $\sigma(a\otimes b)=(b\otimes a)$ for every $a\in A$, $b\in B$.
\end{lemma}
\begin{lemma}\label{general-distributivity}
Let $A$ be a $R$-module, and $(B_i)_{i\in I}$ be a finite family of $R$-modules (\emph{i.e.}, $I$ is assumed to be a finite set). Then, there is a unique isomorphism of $R$-modules $\delta\colon A\otimes_R \bigoplus_{i\in I}B_i\cong \bigoplus_{i\in I}(A\otimes_R B_i)$ such that $\delta(a\otimes (b_i)_{i\in I})=(a\otimes b_i)_{i\in I}$ for every $a\in A$ and $(b_i)_{i\in I}\in \bigoplus_{i\in I}B_i$. 
\end{lemma}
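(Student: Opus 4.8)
The plan is to construct $\delta$ together with an explicit inverse, each via a universal property, and then to check that the two composites are identities by evaluating them on generators.

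First I would produce $\delta$ itself. Consider the map $\phi\colon A\times\bigoplus_{i\in I}B_i\rightarrow \bigoplus_{i\in I}(A\otimes_R B_i)$ defined by $\phi(a,(b_i)_{i\in I})=(a\otimes b_i)_{i\in I}$. Since $(b_i)_{i\in I}$ is finitely supported (and here $I$ is finite anyway), the family $(a\otimes b_i)_{i\in I}$ does lie in the direct sum. The map $\phi$ is $R$-bilinear because each canonical bilinear map $A\times B_i\rightarrow A\otimes_R B_i$ is $R$-bilinear and the module operations on the direct sum are computed coordinatewise. By the universal property of $A\otimes_R\bigoplus_{i\in I}B_i$, there is then a unique $R$-linear map $\delta$ satisfying $\delta(a\otimes(b_i)_{i\in I})=(a\otimes b_i)_{i\in I}$. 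Uniqueness of such a $\delta$ among \emph{all} $R$-linear maps is already guaranteed by lemma~\ref{spanning}, since the elementary tensors span the source.

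Next I would build a candidate inverse $\epsilon$. For each $j\in I$, let $\iota_j\colon B_j\rightarrow\bigoplus_{i\in I}B_i$ be the canonical injection. The map $(a,b_j)\mapsto a\otimes\iota_j(b_j)$ is $R$-bilinear from $A\times B_j$ to $A\otimes_R\bigoplus_{i\in I}B_i$, hence by the universal property of $A\otimes_R B_j$ it induces a unique $R$-linear map $\epsilon_j\colon A\otimes_R B_j\rightarrow A\otimes_R\bigoplus_{i\in I}B_i$ with $\epsilon_j(a\otimes b_j)=a\otimes\iota_j(b_j)$. Invoking now the universal property of the direct sum $\bigoplus_{i\in I}(A\otimes_R B_i)$ as a coproduct of $R$-modules, the family $(\epsilon_j)_{j\in I}$ assembles into a unique $R$-linear map $\epsilon\colon\bigoplus_{i\in I}(A\otimes_R B_i)\rightarrow A\otimes_R\bigoplus_{i\in I}B_i$ whose restriction to the $j$-th summand is $\epsilon_j$.

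Finally I would verify $\epsilon\circ\delta=\mathrm{id}$ and $\delta\circ\epsilon=\mathrm{id}$, and since both composites are $R$-linear it is enough to test them on generators. For $\delta\circ\epsilon$, a generator of the $j$-th summand is an elementary tensor $a\otimes b_j\in A\otimes_R B_j$, and one computes $\delta(\epsilon_j(a\otimes b_j))=\delta(a\otimes\iota_j(b_j))$, whose $i$-th coordinate is $a\otimes(\iota_j(b_j))_i$; this equals $a\otimes b_j$ when $i=j$ and $a\otimes 0_{B_i}=0$ otherwise, so the original generator is recovered. For $\epsilon\circ\delta$, one evaluates on an elementary tensor $a\otimes(b_i)_{i\in I}$, writes $(b_i)_{i\in I}=\sum_{j\in I}\iota_j(b_j)$ (a finite sum as $I$ is finite), and uses bilinearity of $\otimes$ together with the definition of $\epsilon$ to fold the coordinates back into $a\otimes(b_i)_{i\in I}$.

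There is no deep obstacle here, the statement being the standard distributivity of the tensor product over finite direct sums; the whole difficulty is organizational. The two points deserving care are checking that $\phi$ really takes values in the direct sum (this is where finite support, trivially ensured by the finiteness of $I$, is used) and the bookkeeping in the identity $\epsilon\circ\delta=\mathrm{id}$, which requires decomposing an element $(b_i)_{i\in I}$ as $\sum_{j}\iota_j(b_j)$ and carefully reindexing; I expect this last verification to be the most calculation-heavy, though it remains entirely routine.
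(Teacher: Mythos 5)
Your proposal is correct and complete; the paper itself gives no proof of this lemma, deferring to Bourbaki, and your argument (inducing $\delta$ from the bilinear map $(a,(b_i)_{i\in I})\mapsto(a\otimes b_i)_{i\in I}$, assembling an inverse from the maps $\epsilon_j$ via the coproduct property, and checking both composites on generators) is exactly the standard one such a reference contains. As you note, the only point where finiteness of $I$ even matters is that $(a\otimes b_i)_{i\in I}$ lands in the direct sum, and that would hold for arbitrary $I$ anyway since $(b_i)_{i\in I}$ is finitely supported and $a\otimes 0=0$.
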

\begin{remark}
It may be shown that $\otimes_R$ is also ``associative'': $(A\otimes_R B)\otimes_R C\cong A\otimes_R(B\otimes_R C)$ for every $R$-modules $A,B,C$ (this isomorphism is natural in $A,B,C$). Together with lemma~\ref{symmetric}, this shows that the category of $R$-modules with the tensor product is a symmetric monoidal category (see~\cite{MacLane}). Loosely speaking this means that the bracketing of factors in a $n$-fold tensor product is irrelevant (because any two $n$-fold tensor products that differ only  in the position of brackets are canonically isomorphic). The notion of multilinear maps $f\colon A_1\times\cdots \times A_n\rightarrow B$ (where the $A_i$'s and $B$ are $R$-modules) (see for instance~\cite{Boneh,Coron,Garg,Huang,Rothblumn}) is equivalent to that of linear maps $f\colon A_1\otimes_R\cdots \otimes_R A_n\rightarrow B$ (bilinear maps are recovered with $n=2$).  In particular, any such multilinear map is actually induced by a unique  bilinear map, for instance $f\colon A_1\times (A_2\otimes_R\cdots \otimes_R A_n)\rightarrow B$. We take advantage of this remark to indicate that the notion of tensor product was already used in~\cite{Boneh} (remark 7.1 and subsection 7.2) but in a somewhat limited way since it was not the purpose of the authors. In this contribution we limit ourselves to bilinear maps.
\end{remark}

\section{Tensor product of finite abelian groups}\label{finiteAbgroups}

In this section we focus on the tensor product of finite abelian groups that is even explicitly computed. Moreover we give some conditions under which a pairing may exist.  
\subsection{Some computations of tensor products}
The objective of this subsection is to compute the tensor product of finite abelian groups. So it seems natural to compute at first the easiest example. In what follows, $(a,b)$ denotes the greatest common divisor of $a$ and $b$. The cyclic group of integers modulo $a$ is denoted by $\mathbb{Z}_a$ (and also $C_a$ when considered multiplicatively written). 
\begin{lemma}\label{easycase}
For every positive integers $a,b$, $\mathbb{Z}_a\otimes \mathbb{Z}_b\cong \mathbb{Z}_{(a,b)}$
\end{lemma}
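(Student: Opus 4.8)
The plan is to prove that $\mathbb{Z}_a\otimes\mathbb{Z}_b\cong\mathbb{Z}_{(a,b)}$ by exhibiting an explicit isomorphism, exploiting the universal property of the tensor product together with the fact (from lemma~\ref{spanning}) that $\mathbb{Z}_a\otimes\mathbb{Z}_b$ is generated by a single elementary tensor. First I would observe that since $\mathbb{Z}_a$ is generated by $1$ (the class of $1$) and $\mathbb{Z}_b$ is generated by $1$, lemma~\ref{spanning} guarantees that $\mathbb{Z}_a\otimes\mathbb{Z}_b$ is generated as a $\mathbb{Z}$-module by the single basic tensor $1\otimes 1$. Consequently $\mathbb{Z}_a\otimes\mathbb{Z}_b$ is cyclic, and the whole problem reduces to computing the order of the generator $1\otimes 1$.

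Next I would pin down the order of $1\otimes 1$ from above. Writing $d=(a,b)$, I would show that $d\cdot(1\otimes 1)=0$ in $\mathbb{Z}_a\otimes\mathbb{Z}_b$. The idea is to use B\'ezout: write $d=ua+vb$ for integers $u,v$. Then by bilinearity $d\,(1\otimes 1)=(ua+vb)(1\otimes 1)=u(a\cdot 1\otimes 1)+v(1\otimes b\cdot 1)=u(0\otimes 1)+v(1\otimes 0)=0$, since $a\cdot 1=0$ in $\mathbb{Z}_a$ and $b\cdot 1=0$ in $\mathbb{Z}_b$, and an elementary tensor with a zero entry vanishes. This shows the cyclic group $\mathbb{Z}_a\otimes\mathbb{Z}_b$ has order dividing $d$.

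The main obstacle, and the part requiring genuine work, is the lower bound: showing that the order of $1\otimes 1$ is \emph{exactly} $d$ and not a proper divisor. For this I would construct a surjection onto $\mathbb{Z}_d$, or equivalently a nonzero bilinear map. The natural candidate is the map $f\colon\mathbb{Z}_a\times\mathbb{Z}_b\rightarrow\mathbb{Z}_d$ defined by $f(x,y)=xy\bmod d$. I would verify that this is well defined (if $x$ changes by a multiple of $a$, then $xy$ changes by a multiple of $ay$, and since $d\mid a$ this is a multiple of $d$; symmetrically for $y$ using $d\mid b$) and that it is $\mathbb{Z}$-bilinear. By the universal property of the tensor product, $f$ induces a group homomorphism $f'\colon\mathbb{Z}_a\otimes\mathbb{Z}_b\rightarrow\mathbb{Z}_d$ with $f'(1\otimes 1)=1$, which is a generator of $\mathbb{Z}_d$; hence $f'$ is onto and the order of $1\otimes 1$ is at least $d$.

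Combining the two bounds, the cyclic group $\mathbb{Z}_a\otimes\mathbb{Z}_b$ has order exactly $d=(a,b)$, and the surjection $f'$ between two finite cyclic groups of the same order is an isomorphism, giving $\mathbb{Z}_a\otimes\mathbb{Z}_b\cong\mathbb{Z}_{(a,b)}$. I expect the well-definedness check for $f$ to be the only subtle routine point, and the conceptual crux to be the realization that one needs an auxiliary bilinear map into $\mathbb{Z}_d$ to detect that $1\otimes 1$ genuinely has order $d$, since the upper bound alone leaves open the possibility of collapse.
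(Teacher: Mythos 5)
Your proof is correct and follows essentially the same route as the paper: both arguments rest on the induced surjection $\mathbb{Z}_a\otimes\mathbb{Z}_b\rightarrow\mathbb{Z}_{(a,b)}$ coming from the bilinear map $(x,y)\mapsto xy\bmod (a,b)$, together with the facts that $1\otimes 1$ generates the tensor product and that $a(1\otimes 1)=b(1\otimes 1)=0$, hence $(a,b)(1\otimes 1)=0$ via $a\mathbb{Z}+b\mathbb{Z}=(a,b)\mathbb{Z}$. The only (cosmetic) difference is that the paper packages the upper bound by constructing an explicit inverse homomorphism $g\colon\mathbb{Z}_{(a,b)}\rightarrow\mathbb{Z}_a\otimes\mathbb{Z}_b$, whereas you count the order of the generator; both are fine.
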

\begin{proof}
Since $(a,b)$ divides both $a$ and $b$, the map $f\colon \mathbb{Z}_a\times \mathbb{Z}_b\rightarrow \mathbb{Z}_{(a,b)}$ given by $f(x\bmod a,y\bmod b)=(xy)\bmod(a,b)$ is well-defined. Moreover it is bilinear so that it gives rise to a group homomorphism $\pi\colon \mathbb{Z}_a\otimes \mathbb{Z}_b\rightarrow\mathbb{Z}_{(a,b)}$ such that $\pi((x\bmod a)\otimes (y\bmod b))=(xy)\bmod(a,b)$. We observe that $\pi((x\bmod a)\otimes 1)=x\bmod (a,b)$ for every $x$, so that $\pi$ is onto. Let $\mathbb{Z}\rightarrow \mathbb{Z}_a\otimes \mathbb{Z}_b$ be given by $x\mapsto x(1\otimes 1)$. This is clearly a homomorphism of groups, and for $x\in a\mathbb{Z}$, we have $x(1\otimes 1)=((x\bmod a)\otimes 1)=0$. Similarly, when $x\in b\mathbb{Z}$, we have $x(1\otimes 1)=1\otimes (x\bmod b)=0$. Therefore, $a\mathbb{Z}+b\mathbb{Z}=(a,b)\mathbb{Z}$ belongs to its kernel, and we obtain a homomorphism of groups $g\colon \mathbb{Z}_{(a,b)}\rightarrow \mathbb{Z}_a\otimes\mathbb{Z}_b$ such that $g(x\bmod (a,b))=x(1\otimes 1)=((x\bmod a)\otimes 1=1\otimes(x\bmod b)$ for all $x$.  We have $\pi(g(x\bmod (a,b)))=\pi((x\bmod a)\otimes 1)=x\bmod (a,b)$ for every $x$. We have $g(\pi(x(1\otimes 1)))=xg(1)=x(1\otimes 1)$. This is sufficient to check that $\pi$ and $g$ are inverses one from the other because all tensors in $\mathbb{Z}_a\otimes\mathbb{Z}_b$ have the form $x(1\otimes 1)$ for some $x\in \mathbb{Z}$. Indeed, for an elementary tensor $(x\bmod a)\otimes (y\bmod b)=(xy)(1\otimes 1)$. So sums of elementary tensors are also multiple of $1\otimes 1$. \qed
\end{proof}
\begin{remark}
It follows from lemma~\ref{easycase} that $\mathbb{Z}_a\otimes\mathbb{Z}_b=(0)$ if, and only if,  $a$ and $b$ are co-prime. 
\end{remark}
Lemmas~\ref{symmetric}, \ref{general-distributivity} and~\ref{easycase} imply the following result that actually covers all examples of finite abelian groups.
\begin{lemma}\label{generalcase}
Let $(a_i)_{i=1}^m$, and $(b_j)_{j=1}^n$ be two families of positive integers. 
Let $A=\bigoplus_{i=1}^m\mathbb{Z}_{a_i}$, and $B=\bigoplus_{j=1}^n \mathbb{Z}_{b_j}$. Then, $A\otimes B\cong \bigoplus_{\substack{i=1,\cdots,m\\j=1,\cdots,n}}\mathbb{Z}_{(a_i,b_j)}$ where the isomorphism is given by the unique group homomorphism such that $((x_1\bmod a_1,\cdots,x_m\bmod a_m)\otimes (y_1\bmod b_1,\cdots,y_n\bmod b_n))\mapsto ((x_iy_j)\bmod (a_i,b_j))_{\substack{i=1,\cdots,m\\j=1,\cdots,n}}$. Moreover, the canonical bilinear map $\otimes$ is then given by $\displaystyle\otimes\colon A\times B\rightarrow \bigoplus_{\substack{i=1,\cdots,m\\j=1,\cdots,n}}\mathbb{Z}_{(a_i,b_j)}$ with $(x_1\bmod a_1,\cdots,x_m\bmod a_m)\otimes (y_1\bmod b_1,\cdots,y_n\bmod b_n)=((x_iy_j)\bmod (a_i,b_j))_{\substack{i=1,\cdots,m\\j=1,\cdots,n}}$.
\end{lemma}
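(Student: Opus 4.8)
The plan is to build the isomorphism by chaining the three lemmas and then to follow a generic elementary tensor through the chain in order to read off the two explicit formulas. First I would apply Lemma~\ref{general-distributivity} to split off the first direct summand,
$$A\otimes B=\Big(\bigoplus_{i=1}^m\mathbb{Z}_{a_i}\Big)\otimes B\cong\bigoplus_{i=1}^m\bigl(\mathbb{Z}_{a_i}\otimes B\bigr),$$
then, for each fixed $i$, combine Lemma~\ref{symmetric} with a second application of Lemma~\ref{general-distributivity} to get $\mathbb{Z}_{a_i}\otimes B=\mathbb{Z}_{a_i}\otimes\bigoplus_{j=1}^n\mathbb{Z}_{b_j}\cong\bigoplus_{j=1}^n\bigl(\mathbb{Z}_{a_i}\otimes\mathbb{Z}_{b_j}\bigr)$, and finally invoke Lemma~\ref{easycase} to identify each factor $\mathbb{Z}_{a_i}\otimes\mathbb{Z}_{b_j}$ with $\mathbb{Z}_{(a_i,b_j)}$. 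Composing these isomorphisms produces the abstract isomorphism $A\otimes B\cong\bigoplus_{i,j}\mathbb{Z}_{(a_i,b_j)}$, which already settles the isomorphism type.

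The real work is to check that the composite acts by the announced formula rather than merely existing. Since, by Lemma~\ref{spanning}, every tensor is a finite sum of elementary tensors and each isomorphism in the chain is a group homomorphism, it is enough to track a single elementary tensor. Writing $e_i$ and $f_j$ for the canonical generators of the $i$-th and $j$-th summands of $A$ and $B$, the bilinearity of the canonical map gives
$$(x_1\bmod a_1,\dots,x_m\bmod a_m)\otimes(y_1\bmod b_1,\dots,y_n\bmod b_n)=\sum_{i,j}x_iy_j\,(e_i\otimes f_j).$$
Under the two distributivity isomorphisms each $e_i\otimes f_j$ is sent to the basic tensor $1\otimes 1$ lodged in the $(i,j)$-summand $\mathbb{Z}_{a_i}\otimes\mathbb{Z}_{b_j}$, and Lemma~\ref{easycase} carries this $1\otimes 1$ to $1\bmod(a_i,b_j)$. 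Consequently $x_iy_j\,(e_i\otimes f_j)$ contributes $x_iy_j\bmod(a_i,b_j)$ in the $(i,j)$-coordinate, and summing over $i,j$ reproduces exactly the stated formula for the isomorphism. The formula for the canonical bilinear map then follows at once, because $\otimes\colon A\times B\rightarrow A\otimes B$ is by definition $(a,b)\mapsto a\otimes b$, so reading it in the new coordinates amounts to applying the isomorphism just computed to each elementary tensor.

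I expect the main obstacle to be purely the bookkeeping of this transport, and in particular the well-definedness of the target expression with respect to the chosen representatives: changing $x_i$ by a multiple of $a_i$ alters $x_iy_j$ by a multiple of $a_iy_j$, hence by a multiple of $(a_i,b_j)$ since $(a_i,b_j)\mid a_i$, and symmetrically for $y_j$; this is precisely the well-definedness already verified inside Lemma~\ref{easycase}, so it transfers without new input. An equivalent and slightly more self-contained route would bypass the chain: define $\varphi\colon A\times B\rightarrow\bigoplus_{i,j}\mathbb{Z}_{(a_i,b_j)}$ directly by the displayed formula, check it is a well-defined element of $\mathpzc{Bil}(A\times B,\bigoplus_{i,j}\mathbb{Z}_{(a_i,b_j)})$, pass to the induced homomorphism on $A\otimes B$ via the universal property, and then use the abstract isomorphism above (or a direct inverse assembled from the map $g$ of Lemma~\ref{easycase}) to conclude that it is bijective.
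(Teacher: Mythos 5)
Your proof is correct and follows exactly the route the paper intends: the paper gives no written proof of this lemma, stating only that it is implied by Lemmas~\ref{symmetric}, \ref{general-distributivity} and~\ref{easycase}, and your chaining of those three lemmas together with the transport of an elementary tensor through the composite supplies precisely the omitted details. (One trivial bookkeeping remark: as stated, Lemma~\ref{general-distributivity} distributes over the \emph{second} tensor factor, so the appeal to Lemma~\ref{symmetric} is needed in your first step rather than your second; this does not affect the argument.)
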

Any finite abelian group $A$ is isomorphic to a direct product $\bigoplus_{p\in P}\left (\bigoplus_{i\in A_p}\mathbb{Z}_{p^{i}}\right)$ where $P$ is the set of all prime numbers, for each $p\in P$, $A_p$ is a finite subset of $\mathbb{N}_{+}$ such that all but finitely many $A_p$'s are non-void  (hence  $\bigoplus_{p\in P}\left (\bigoplus_{i\in A_p}\mathbb{Z}_{p^{i}}\right)$ is a false infinite sum since $\bigoplus_{i\in \emptyset}\mathbb{Z}_{p^{i}}\cong (0)$ for every $p\in P$ with $A_p=\emptyset$). This decomposition is unique up to isomorphism, and we refer to it as the \emph{primary decomposition}. All these results make possible to compute $A\otimes B$ for any finite abelian groups $A,B$ using lemma~\ref{generalcase}, and also to deduce an essential finiteness result for tensor products.
\begin{lemma}\label{finiteness}
The tensor product of two finite groups is finite. Moreover, using the above notations, for every finite abelian groups $A,B$, the primary decomposition of $A\otimes B$ is given by $$A\otimes B\cong \bigoplus_{p\in P}\left (\bigoplus_{i\in A_p}\bigoplus_{j\in B_p}\mathbb{Z}_{p^{\min(i,j)}}\right)\ .$$ 
\end{lemma}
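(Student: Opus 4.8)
The plan is to reduce everything to the cyclic case already settled in Lemma~\ref{easycase} and then to exploit distributivity of $\otimes$ over finite direct sums. First, for the finiteness claim, I would note that by Lemma~\ref{tensor-is-commutative} the tensor product of two finite groups $G,H$ is isomorphic to $\mathpzc{Ab}(G)\otimes\mathpzc{Ab}(H)$, and the abelianization of a finite group is a finite abelian group. Thus it suffices to treat two finite abelian groups $A,B$. Writing $A\cong\bigoplus_{i=1}^m\mathbb{Z}_{a_i}$ and $B\cong\bigoplus_{j=1}^n\mathbb{Z}_{b_j}$ as finite direct sums of cyclic groups, Lemma~\ref{generalcase} yields $A\otimes B\cong\bigoplus_{i,j}\mathbb{Z}_{(a_i,b_j)}$, a finite direct sum of finite cyclic groups, hence finite.

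For the explicit primary decomposition, I would start from the primary decompositions $A\cong\bigoplus_{p\in P}\bigoplus_{i\in A_p}\mathbb{Z}_{p^i}$ and $B\cong\bigoplus_{q\in P}\bigoplus_{j\in B_q}\mathbb{Z}_{q^j}$ (both genuinely finite direct sums, as explained above). Applying Lemma~\ref{general-distributivity} repeatedly, together with the symmetry of Lemma~\ref{symmetric}, to distribute $\otimes$ across these direct sums, and then Lemma~\ref{easycase} to each elementary factor, gives
$$A\otimes B\cong\bigoplus_{p,q\in P}\bigoplus_{\substack{i\in A_p\\ j\in B_q}}\left(\mathbb{Z}_{p^i}\otimes\mathbb{Z}_{q^j}\right)\cong\bigoplus_{p,q\in P}\bigoplus_{\substack{i\in A_p\\ j\in B_q}}\mathbb{Z}_{(p^i,q^j)}\ .$$

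The decisive step is then the elementary number-theoretic computation of the greatest common divisor $(p^i,q^j)$. When $p\neq q$ the prime powers $p^i$ and $q^j$ are co-prime, so $(p^i,q^j)=1$ and $\mathbb{Z}_{(p^i,q^j)}\cong(0)$ contributes nothing; only the ``diagonal'' terms with $p=q$ survive, for which $(p^i,p^j)=p^{\min(i,j)}$. Substituting this into the double sum collapses it to $\bigoplus_{p\in P}\bigoplus_{i\in A_p}\bigoplus_{j\in B_p}\mathbb{Z}_{p^{\min(i,j)}}$, which is exactly the asserted formula. Since each surviving summand is a cyclic group of prime-power order, the right-hand side is automatically already written in primary form, so no further normalisation is needed. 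There is no genuine obstacle here: the only point requiring a little care is to keep track that the nominally infinite sums over $P$ are in fact finite, so that the distributivity lemmas, stated for finite families, really do apply.
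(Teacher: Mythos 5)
Your proof is correct and follows essentially the same route as the paper: reduce to abelian groups via Lemma~\ref{tensor-is-commutative}, decompose into finite direct sums of cyclic groups, and apply Lemmas~\ref{symmetric}, \ref{general-distributivity} and~\ref{easycase} (i.e.\ Lemma~\ref{generalcase}). You actually spell out the gcd computation $(p^i,q^j)=1$ for $p\neq q$ and $(p^i,p^j)=p^{\min(i,j)}$ that the paper's own proof leaves implicit, which is a welcome addition rather than a divergence.
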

\begin{proof}
Let $A$ and $B$ be two finite abelian groups. Then each of them admits a decomposition in direct sum of finite cyclic groups, and their tensor product is finite according to lemma~\ref{generalcase}. Because the tensor product of two groups is isomorphic to the tensor product of their abelianization (lemma~\ref{tensor-is-commutative}), the expected conclusion holds (we implicitly used the two easy facts that the abelianization of a finite group is finite, and the isomorphism relation of groups preserves the order). \qed
\end{proof}
\begin{remark}
The tensor product of two finite groups does not depend on the decomposition of the abelianization of each group into a direct sum of cyclic groups. Indeed,  $\otimes$ is a functor (and even a bifunctor), and it is an obvious property of functors to transform isomorphisms into isomorphisms. More precisely, for groups (finite or not) $G,G^{\prime},H,H^{\prime}$ such that $G\cong G^{\prime}$ and $H\cong H^{\prime}$, then $G\otimes H\cong G^{\prime}\otimes H^{\prime}$. The converse assertion is false since for instance $\mathbb{Z}_{6}\otimes\mathbb{Z}_{4}\cong\mathbb{Z}_2\cong \mathbb{Z}_2\otimes\mathbb{Z}_2$.
\end{remark}

\subsection{Non-degeneracy of the canonical bilinear map}

In this subsection we present a sufficient condition for the canonical bilinear map $\otimes$ to be non-degenerate. We also prove that the canonical bilinear map from $A\times A$ to the tensor square $A\otimes A$ always is non-degenerate for each finite abelian group $A$, providing an infinite family of pairings. 
\begin{lemma}\label{nondegeneracy1}
Let $a$ and $b$ be two positive integers. The canonical bilinear map $\otimes\colon (x\bmod a,y\bmod b)\in \mathbb{Z}_a\times\mathbb{Z}_b\rightarrow (xy)\bmod (a,b)\in \mathbb{Z}_{(a,b)}\cong \mathbb{Z}_a\otimes\mathbb{Z}_b$ is non-degenerate if, and only if, $a=b$. 
\end{lemma}
\begin{proof}
If $a=b=1$, then all groups are trivial, and the result is obvious. Let $a=(a,b)=b\not=1$. Let $x\bmod a\not=0$ such that for every $y$, $(xy)\bmod a=0$, then we obtain a contradiction when $y=1$. Therefore, $\otimes$ is non-degenerate. Now, let us assume that $(a,b)<a$ for instance. Then, $(a,b)\bmod a\not=0$, and for all $y$, $(a,b)y\bmod (a,b)=0$ so that $\otimes$ is degenerate. \qed
\end{proof}
Let $p$ be a prime number. Let $A,B$ be two finite abelian $p$-groups (that is, finite abelian groups of order $p^n$ for some $n$), and $C$ be an abelian group. The theorem of invariant factors imply that  $A\cong \displaystyle\bigoplus_{i=1}^m\mathbb{Z}_{p^{\alpha_i}}$ with $\alpha_1\geq \alpha_2\geq \cdots \geq \alpha_m\geq 1$, and $B\cong \displaystyle\bigoplus_{j=1}^n\mathbb{Z}_{p^{\beta_j}}$ with $\beta_1\geq \beta_2\geq \cdots \geq \beta_n\geq 1$. We recall that the \emph{exponent} $\mathpzc{exp}(G)$ of a finite group $G$ is the least common multiple of the orders of the elements of $G$. Therefore, $\mathpzc{exp}(A)=p^{\alpha_1}$ and $\mathpzc{exp}(B)=p^{\beta_1}$. Let $f\colon A\times B\rightarrow C$ be a pairing. Let us assume for instance that $\mathpzc{exp}(A)>\mathpzc{exp}(B)$. Then, $f((\mathpzc{exp}(B)1,0,\cdots,0),(y_1,\cdots,y_n))=f((1,0,\cdots,0),\mathpzc{exp}(B)(y_1,\cdots,y_n))=f((1,0,\cdots,0),(0,\cdots,0))=1$ for every $y_1,\cdots,y_n$. Thus $f$ would be degenerate. Therefore, $\mathpzc{exp}(A)=\mathpzc{exp}(B)$. The following lemma is thus proved.
\begin{lemma}\label{exponent}
Let $A,B$ be two finite abelian $p$-groups, and $C$ be an abelian group. Let $f\colon A\times B\rightarrow C$ be a pairing. Then, $\mathpzc{exp}(A)=\mathpzc{exp}(B)$.
\end{lemma}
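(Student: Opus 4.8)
The plan is to establish $\mathpzc{exp}(A)\le \mathpzc{exp}(B)$ using left non-degeneracy, and then to obtain the reverse inequality by the symmetric argument (interchanging the roles of $A$ and $B$, and using right non-degeneracy in place of left), so that equality follows. First I would normalize both groups via the structure theorem for finite abelian $p$-groups: write $A\cong \bigoplus_{i=1}^m\mathbb{Z}_{p^{\alpha_i}}$ with $\alpha_1\ge\cdots\ge\alpha_m\ge 1$ and $B\cong\bigoplus_{j=1}^n\mathbb{Z}_{p^{\beta_j}}$ with $\beta_1\ge\cdots\ge\beta_n\ge 1$. The exponent of a finite abelian $p$-group is the largest order attained by any of its elements (the orders being powers of $p$, their least common multiple is their maximum), so $\mathpzc{exp}(A)=p^{\alpha_1}$ and $\mathpzc{exp}(B)=p^{\beta_1}$, and this maximal order is realized by a generator of the leading cyclic summand.

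Next I would argue by contradiction. Suppose $\mathpzc{exp}(A)>\mathpzc{exp}(B)$, i.e. $\alpha_1>\beta_1$. Since every element order divides the exponent, $\mathpzc{exp}(B)$ annihilates all of $B$. Let $a\in A$ be an element of order $p^{\alpha_1}=\mathpzc{exp}(A)$ and set $a'=\mathpzc{exp}(B)\cdot a$. As $\mathpzc{exp}(B)=p^{\beta_1}<p^{\alpha_1}$, multiplying $a$ by $\mathpzc{exp}(B)$ does not annihilate it, so $a'\ne 0_A$. The crux is then the single identity $f(k\,a,b)=f(a,k\,b)$, valid for all integers $k$ (both sides equal $k\,f(a,b)$ by $\mathbb{Z}$-bilinearity): for every $b\in B$ one computes $f(a',b)=f(\mathpzc{exp}(B)\cdot a,b)=f(a,\mathpzc{exp}(B)\cdot b)=f(a,0_B)=0_C$. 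Hence $a'\ne 0_A$ while $f(a',\cdot)$ is trivial, contradicting left non-degeneracy. This gives $\mathpzc{exp}(A)\le\mathpzc{exp}(B)$, and the mirror computation (with a maximal-order element of $B$ and the scalar $\mathpzc{exp}(A)$) gives $\mathpzc{exp}(B)\le\mathpzc{exp}(A)$.

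I do not anticipate a genuine obstacle; the argument is short, and the only care needed is bookkeeping. One must ensure the witness $a$ has order \emph{exactly} $\mathpzc{exp}(A)$, so that the scalar $\mathpzc{exp}(B)$, being strictly smaller, does not kill it; and one must correctly transfer this scalar from the $A$-slot to the $B$-slot, where it acts as zero. The mildly conceptual point worth flagging is the asymmetry in what each group contributes: from $B$ we use only that its exponent annihilates it, whereas from $A$ we need precisely the existence of a single element of maximal order.
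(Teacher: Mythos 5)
Your argument is correct and is essentially identical to the paper's own proof: the paper also decomposes $A$ and $B$ by invariant factors, takes the generator $(1,0,\dots,0)$ of the leading summand of $A$, multiplies it by $\mathpzc{exp}(B)$, and shifts that scalar across $f$ to contradict non-degeneracy, handling the reverse inequality by symmetry. No differences worth noting.
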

Let $A\cong \bigoplus_{p\in P}A(p)$ and $B\cong\bigoplus_{p\in P}B(p)$ be the primary decomposition of two finite abelian groups  $A$ and $B$. For each prime number $p$, let $A(p)\cong \bigoplus_{i=1}^{n_A(p)}\mathbb{Z}_{p^{\alpha_i}}$ and $B(p)\cong \bigoplus_{j=1}^{n_B(p)}\mathbb{Z}_{p^{\beta_j}}$ be the invariant factor decomposition of each factor of the primary decomposition.  Let $p,q$ be two distinct prime numbers. Then by lemma~\ref{generalcase}, $A(p)\otimes B(q)\cong \bigoplus_{i=1}^{n_A(p)}\bigoplus_{j=1}^{n_B(p)}\mathbb{Z}_{(p^{\alpha_i},q^{\beta_j})}\cong (0)$. Thus, again by lemma~\ref{generalcase}, $A\otimes B\cong \displaystyle\bigoplus_{p\in P}A(p)\otimes B(p)$. Let $f\colon A \times B\rightarrow A\otimes B$ be a bilinear map. Then, $f(a,b)=0$ for every $a\in A(p)$, $b\in B(q)$ with distinct prime numbers $p,q$. For each prime number $p$, let $f_p\colon A(p)\times B(p)\rightarrow A(p)\otimes B(p)$ be the obvious restriction of $f$, which also is a bilinear map. Then, $f((a_p)_{p\in P},(b_p)_{p\in P})=(f_p(a_p,b_p))_{p\in P}$.  In particular $\otimes_p\colon A(p)\times B(p)\rightarrow A(p)\otimes B(p)$ is the corresponding canonical bilinear map so that $(a_p)_{p\in P}\otimes (b_p)_{p\in P}=(a_p\otimes_p b_p)_{p\in P}$. 
\begin{theorem}\label{prim-decomp-pairing}
Using the above notations, the canonical bilinear map $\otimes\colon A\times B\rightarrow A\otimes B$ is non-degenerate if, and only if, for every prime number $p$, $\otimes_p$ is non-degenerate (and in particular, according to lemma~\ref{exponent}, $\mathpzc{exp}(A(p))=\mathpzc{exp}(B(p))$). More generally, $f\in\mathpzc{Bil}(A\times B,A\otimes B)$ is a pairing if, and only if, $f_p\in\mathpzc{Bil}(A(p)\times B(p),A(p)\otimes B(p))$ is a pairing for each prime number $p$.
\end{theorem}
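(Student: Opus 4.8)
The plan is to reduce everything to a componentwise analysis, exploiting the two facts established just before the statement: that $A\otimes B\cong\bigoplus_{p\in P}A(p)\otimes B(p)$, and that \emph{every} bilinear map $f\colon A\times B\rightarrow A\otimes B$ is ``diagonal'', in the sense that $f((a_p)_{p\in P},(b_p)_{p\in P})=(f_p(a_p,b_p))_{p\in P}$. Since the first assertion of the theorem is merely the special case $f=\otimes$, $f_p=\otimes_p$ of the second, I would prove only the general statement for an arbitrary $f$. I would moreover treat left and right non-degeneracy separately and then take their conjunction: the equivalences ``$f$ is left non-degenerate $\iff$ every $f_p$ is left non-degenerate'' and its right-handed analogue together give ``$f$ is a pairing $\iff$ every $f_p$ is a pairing'', and the parenthetical remark on exponents is then immediate from Lemma~\ref{exponent}.

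For the implication ``every $f_p$ left non-degenerate $\Rightarrow f$ left non-degenerate'', I would take $a=(a_p)_{p\in P}$ with $f(a,b)=0$ for all $b\in B$. The diagonal form turns this into $(f_p(a_p,b_p))_{p\in P}=0$, hence $f_p(a_p,b_p)=0$ for every prime $p$ and every $b\in B$. Because $B=\bigoplus_{p\in P}B(p)$, fixing one prime and letting its component vary while the others are arbitrary shows $f_p(a_p,b_p)=0$ for all $b_p\in B(p)$; left non-degeneracy of $f_p$ forces $a_p=0$, and as this holds for every $p$ we get $a=0$.

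For the converse, assuming $f$ left non-degenerate, I would fix a prime $p_0$ and take $a_{p_0}\in A(p_0)$ with $f_{p_0}(a_{p_0},b_{p_0})=0$ for all $b_{p_0}\in B(p_0)$. Forming $a=(a_p)_{p\in P}$ with $a_p=0$ for $p\neq p_0$, the diagonal form gives $f(a,b)=(f_p(a_p,b_p))_{p\in P}$ for any $b$, whose $p_0$-component vanishes by hypothesis and whose other components are $f_p(0,b_p)=0$ by linearity; thus $f(a,b)=0$ for all $b$, and left non-degeneracy of $f$ yields $a=0$, i.e. $a_{p_0}=0$. Exchanging the roles of the two arguments verbatim handles right non-degeneracy.

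This argument is essentially routine, since the real structural input---the vanishing of the cross terms $A(p)\otimes B(q)$ for $p\neq q$ and the resulting diagonal decomposition of every bilinear map into $A\otimes B$---has already been secured in the discussion preceding the statement. The only point deserving care is the independent variation of the components $b_p$: this is precisely where the direct-sum (rather than merely subgroup) structure of $B=\bigoplus_{p\in P}B(p)$ is used, both to descend a global vanishing to each prime in the first implication and to lift a single component to a global element by padding with zeros in the converse.
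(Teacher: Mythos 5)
Your proof is correct and follows essentially the same route as the paper's: reduce to the second assertion, use the diagonal decomposition $f((a_p)_p,(b_p)_p)=(f_p(a_p,b_p))_p$ established beforehand, and for the harder direction pad a single offending component with zeros to contradict (or, in your contrapositive phrasing, to exploit) the non-degeneracy of $f$. The only differences are presentational --- you spell out the easy direction that the paper dismisses as clear, and you argue directly rather than by contradiction --- so nothing further is needed.
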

\begin{proof}
It is obviously sufficient to prove the second assertion. 
It is clear that non-degeneracy of all $f_p$ implies non-degeneracy of $f$. Now, let us assume that $f$ is non-degenerate but there is some prime number $p_0$ and $a\in A(p_0)$, $a\not=0$ such that $f_p(a, b)=0$ for every $b\in B(p)$. Then, let us consider $(a_p)_{p\in P}\in A$ such that $a_p=0$ for every $p\not=p_0$, and $a_{p_0}=a$. Then, for every $(b_p)_{p\in P}\in B$, $f((a_p)_{p},(b_p)_p)=0$ which contradicts non-degeneracy of $f$. \qed
\end{proof}
Next lemma explains in what extend non-degeneracy of the canonical bilinear map is essential for the existence of pairings. 
\begin{lemma}\label{degeneracy-of-can-implies-no-pairings}
Let $A,B$ be two non-trivial $R$-modules. If  there are a $R$-module $C$ and a pairing $\langle \cdot\mid\cdot\rangle\colon A\times B\rightarrow C$, then the canonical bilinear map $\otimes \colon A\times B\rightarrow A\otimes_R B$ is non-degenerate.  
\end{lemma}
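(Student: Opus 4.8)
The plan is to exploit the universal property of the tensor product recalled above, which reduces the statement to a one-line deduction from the non-degeneracy of the given pairing. First I would apply that universal property: since $\langle\cdot\mid\cdot\rangle\in\mathpzc{Bil}_R(A\times B,C)$, there is a unique $R$-linear map $g\colon A\otimes_R B\rightarrow C$ such that $g(a\otimes b)=\langle a\mid b\rangle$ for all $a\in A$, $b\in B$. The whole idea is that $g$ transports any degeneracy of the canonical bilinear map $\otimes$ into a degeneracy of the pairing, which is forbidden by hypothesis.

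Concretely, to establish left non-degeneracy of $\otimes$, I would take $a\in A$ with $a\otimes b=0_{A\otimes_R B}$ for every $b\in B$, apply $g$, and obtain $\langle a\mid b\rangle=g(a\otimes b)=g(0_{A\otimes_R B})=0_C$ for every $b\in B$. Left non-degeneracy of the pairing then forces $a=0_A$, as desired. The argument for right non-degeneracy is entirely symmetric: if $a\otimes b=0_{A\otimes_R B}$ for every $a\in A$, then $\langle a\mid b\rangle=0_C$ for every $a\in A$, whence $b=0_B$ by right non-degeneracy of the pairing. Combining the two directions yields that $\otimes$ is non-degenerate.

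I do not anticipate any genuine obstacle here; the result is essentially a reformulation of the universal property together with the trivial fact that a linear image of $0$ is $0$. The only points requiring care are purely notational: one must correctly unwind the definition of (left/right) non-degeneracy as injectivity of $a\mapsto f(a,\cdot)$ (respectively, $b\mapsto f(\cdot,b)$), and one should observe that the non-triviality hypothesis on $A$ and $B$ plays no role in the implication itself---it merely ensures that the conclusion is not a vacuous statement about trivial modules.
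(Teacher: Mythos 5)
Your proof is correct and is essentially the paper's own argument, just stated directly rather than by contraposition: both factor the pairing through $A\otimes_R B$ via the universal property and use that the linear image of $0$ is $0$ to transfer non-degeneracy from the pairing to the canonical map. Your remark that the non-triviality hypothesis is not actually used in the implication is also accurate.
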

\begin{proof}
By contraposition, let us assume that $\otimes\colon A\times B\rightarrow A\otimes_R B$ is degenerate, and for instance that it is not left non-degenerate. Then, there exists $a_0\in A$, $a_0\not=0_A$, such that for every $b\in B$, $a_0\otimes b=0$. Let $\langle\cdot\mid\cdot\rangle\colon A\times B\rightarrow C$ be a bilinear map. Then, there exists a unique $R$-linear map $f\colon A\otimes_R B\rightarrow C$ such that $f(a\otimes b)=\langle a\mid b\rangle$. In particular, $\langle a_0\mid b\rangle=f(a_0\otimes b)=f(0)=1_C$ for all $b\in B$. Therefore, $\langle\cdot\mid\cdot\rangle$ is left degenerate. \qed
\end{proof}
We anticipate a result from subsection~\ref{duality} to state a sufficient condition for the existence of a pairing, from the cartesian square to the tensor square of some abelian group, provided by the following result.
\begin{theorem}\label{tensor-square}
Let $A$ be a finite abelian group. Then, the canonical bilinear map $\otimes\colon A\times A\rightarrow A\otimes A$ is non-degenerate.
\end{theorem}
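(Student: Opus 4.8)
The plan is to reduce the statement to a completely explicit computation. First I would observe that, because the two factors coincide, it suffices to prove left non-degeneracy: by Lemma~\ref{symmetric} there is an automorphism $\sigma$ of $A\otimes A$ with $\sigma(a\otimes b)=b\otimes a$, so $a\otimes b=0$ if and only if $b\otimes a=0$; hence left and right non-degeneracy are equivalent in this situation. (Alternatively one could first invoke Theorem~\ref{prim-decomp-pairing} to reduce to the case of a finite abelian $p$-group, but this reduction turns out to be unnecessary for the argument below.)

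Next I would make everything concrete using Lemma~\ref{generalcase}. Writing $A\cong\bigoplus_{i=1}^m\mathbb{Z}_{a_i}$, that lemma identifies $A\otimes A$ with $\bigoplus_{i,j}\mathbb{Z}_{(a_i,a_j)}$ and describes the canonical bilinear map by
$$(x_1\bmod a_1,\dots,x_m\bmod a_m)\otimes(y_1\bmod a_1,\dots,y_m\bmod a_m)=\big((x_iy_j)\bmod(a_i,a_j)\big)_{i,j}.$$
Now let $a=(x_1\bmod a_1,\dots,x_m\bmod a_m)\neq 0$, and choose an index $k$ with $x_k\not\equiv 0\pmod{a_k}$, which is possible since $a$ is nonzero. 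The test element to pair against $a$ is the standard generator $e_k$, i.e.\ the tuple with $y_k=1$ and $y_j=0$ for $j\neq k$.

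The point --- and really the only thing one has to notice --- is that the diagonal coordinate survives. In $a\otimes e_k$ every component with $j\neq k$ vanishes, while the $(k,k)$-component equals $x_k\cdot 1\bmod(a_k,a_k)=x_k\bmod a_k$, which is nonzero by the choice of $k$; here the equality $(a_k,a_k)=a_k$ is exactly what keeps the full coordinate $x_k$ alive. Hence $a\otimes e_k\neq 0$, which establishes left non-degeneracy, and therefore, by the symmetry remark above, the non-degeneracy of $\otimes\colon A\times A\to A\otimes A$. I do not expect any genuine obstacle beyond setting up the explicit model of the tensor product; the substance of the argument is the single observation that pairing a nonzero element with its matching basis vector preserves that element's own diagonal coordinate.
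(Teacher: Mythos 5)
Your proof is correct, but it follows a genuinely different route from the paper's. The paper computes nothing explicitly at this point: it anticipates the character-duality construction of subsection~\ref{duality}, which supplies at least one pairing $(\bigoplus_{i=1}^n\mathbb{Z}_{d_i})\times(\bigoplus_{i=1}^n\mathbb{Z}_{d_i})\rightarrow C_N$ with $N=\prod_{i=1}^n d_i$, and then invokes lemma~\ref{degeneracy-of-can-implies-no-pairings} (the mere existence of a pairing on $A\times B$ forces the canonical bilinear map to be non-degenerate), finishing by transporting non-degeneracy along an isomorphism $\phi\colon A\rightarrow\bigoplus_{i=1}^n\mathbb{Z}_{d_i}$. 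You instead work inside the explicit model of $A\otimes A$ given by lemma~\ref{generalcase} and exhibit, for each nonzero $a$, a concrete witness $e_k$ with $a\otimes e_k\neq 0$; the key observation, which you identify correctly, is that the diagonal component lands in $\mathbb{Z}_{(a_k,a_k)}=\mathbb{Z}_{a_k}$, so the coordinate $x_k$ survives intact, and lemma~\ref{symmetric} converts left into right non-degeneracy (you silently identify $A$ with the direct sum, but that is harmless since $\otimes$ is a bifunctor). Your argument is more elementary and self-contained: it avoids the forward reference to section~\ref{constructions} that the paper itself has to flag, and it makes the mechanism of non-degeneracy visible coordinate by coordinate. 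What the paper's detour buys is a working illustration of the general principle of lemma~\ref{degeneracy-of-can-implies-no-pairings} together with an explicit pairing into a cyclic group $C_N$, which is what the cryptographic applications actually want; your proof establishes the theorem but does not by itself produce such a pairing.
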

\begin{proof}
Since $A$ is a finite abelian group, it admits a decomposition into cyclic groups $A\cong\displaystyle\bigoplus_{i=1}^n \mathbb{Z}_{d_i}$ for some integers $d_i$. In subsection~\ref{duality} is proved that there exists at least one pairing $(\bigoplus_{i=1}^n \mathbb{Z}_{d_i})\times (\bigoplus_{i=1}^n \mathbb{Z}_{d_i})\rightarrow C_N$, where $C_N$ denotes the cyclic group of order $N$, with $N=\prod_{i=1}^n d_i$. Therefore according to lemma~\ref{degeneracy-of-can-implies-no-pairings}, the canonical bilinear map $\otimes\colon (\bigoplus_{i=1}^n \mathbb{Z}_{d_i})\times (\bigoplus_{i=1}^n \mathbb{Z}_{d_i})\rightarrow (\bigoplus_{i=1}^n \mathbb{Z}_{d_i})\otimes(\bigoplus_{i=1}^n \mathbb{Z}_{d_i})$ is non-degenerate. Let $\phi\colon A \rightarrow \bigoplus_{i=1}^n \mathbb{Z}_{d_i}$ be an isomorphism of groups. Let us assume that there exists $a_0\in A$ such that $a_0\otimes a=0$ for every $a\in A$. Then, $\phi(a_0)\otimes\phi(a)=0$ for every $a\in A$. Since $\phi$ is onto, this implies that $\phi(a_0)\otimes a^{\prime}=0$ for every $a^{\prime}\in \bigoplus_{i=1}^n \mathbb{Z}_{d_i}$. So that $\phi(a_0)=0$ (by non-degeneracy), and thus $a_0=0_A$. \qed
\end{proof}
\begin{remark}
Theorem~\ref{tensor-square} provides an infinite family of pairings because in this situation $\otimes$ is itself a pairing. This generalizes some optimized constructions of pairings over elliptic curves on finite fields as defined in~\cite{Hess} such as Weil (\cite{Miller}), Tate (\cite{Ruck}) and ate (\cite{Hessetal}) pairings which may be defined on $\mathbb{Z}_a\times \mathbb{Z}_a$ for some integer $a$ and with values in the group of $a$-th roots of the unity $\mu_a\cong \mathbb{Z}_a\cong \mathbb{Z}_a\otimes\mathbb{Z}_a$ in a finite field $\mathbb{F}_{p^n}$ (where $a$ divides $p^n-1$). We observe however that these pairings are usually defined on a bigger cartesian product of groups (see for instance~\cite{Silverman} concerning Weil pairing).
\end{remark}
\begin{remark}
We also observe that there are some pairings $f\colon A\times B\rightarrow C$ where $A,B,C$ are finite abelian groups such that $A$ and $B$ are non-isomorphic. For instance, let $p$ be a prime number, and $m>1$ be an integer. Then, the canonical bilinear map $\otimes\colon \mathbb{Z}_p^m \times \mathbb{Z}_p\rightarrow \mathbb{Z}_p^{m}$ given by $(x_i\bmod p)_{i=1}^m\otimes (y\bmod p)=(x_i y\bmod p)_{i=1}^m$ is non-degenerate. 
\end{remark}

\section{Constructions of bilinear maps and pairings}\label{constructions}

In full generality the canonical bilinear map is not always non-degenerate  (even when the tensor product does not collapse to zero, see the discussion after lemma~\ref{degeneracy-of-can-implies-no-pairings}). As stated in lemma~\ref{degeneracy-of-can-implies-no-pairings}, non-degeneracy of this function is a necessary condition for the existence of pairings. In this section, we present other constructions of bilinear maps and pairings using the fact that the set of all bilinear maps, from some fixed $A\times B$ to $C$, forms an abelian group (or $R$-module). Moreover we prove that for a particular choice of $A,B$ and $C$, $\mathpzc{Bil}(A\times B,C)$ is actually a ring, and that the pairings are exactly the group of units of this ring (see theorem~\ref{grpofunits}). 

\subsection{Abelian group structure of bilinear maps (and pairings)}

First of all, we know from the proof of lemma~\ref{tensor-is-commutative} that for every groups $G,H,K$, $\mathpzc{Bil}(G\times H,K)\cong \mathpzc{Bil}(\mathpzc{Ab}(G)\times\mathpzc{Ab}(H),K)$. According to the universal property of tensor product of groups, $\mathpzc{Bil}(G\times H,K)\cong \mathpzc{Hom}(G\otimes H,K)$. Therefore, for every triple of abelian groups (respectively, $R$-modules) $A,B,C$, $\mathpzc{Bil}(A\times B,C)\cong \mathpzc{Hom}(A\otimes B,C)$ (respectively, $\mathpzc{Bil}_R(A\times B,C)\cong \mathpzc{Hom}_R(A\otimes_R B,C)$). But the later is itself an abelian group (respectively, a $R$-module) with point-wise operations, so that $\mathpzc{Bil}(A\times B,C)$ (respectively, $\mathpzc{Bil}_R(A\times B,C)$) becomes an abelian group (respectively, a $R$-module). More precisely, let $A,B,C$ be three $R$-modules, and let us assume that $A,B$ are given in additive notation (recall that $0_A,0_B$ are the identity elements of $A$ and $B$) and $C$ is multiplicatively written (recall that $1_C$ is the identity of $C$), we have for $f,g\in \mathpzc{Bil}_R(A\times B,C)$ and $\alpha\in R$, three new bilinear maps $fg$, $f^{-1}$, $f^{\alpha}\in  \mathpzc{Bil}_R(A\times B,C)$ defined by $(fg)(a,b)=f(a,b)g(a,b)$, $f^{-1}(a,b)=(f(a,b))^{-1}$ and $f^{\alpha}(a)=(f(a))^{\alpha}$ (where the scalar multiplication in $C$ is given by $(\alpha,c)\mapsto c^{\alpha}$ because $C$ is assumed to be in multiplicative notation) for all $a\in A$, $b\in B$. This also defines a structure of $\mathbb{Z}$-module given by $f^n(a,b)=(f(a,b))^n$ for all $a\in A$, $b\in B$, $n\in\mathbb{Z}$. 
The following (obvious) construction uses direct product of modules (or abelian groups).
\begin{lemma}\label{direct-prod}
Let $(C_i)_{i=1}^n$ be a family of $R$-modules, and $A,B$ be $R$-modules. Let $f_i\in\mathpzc{Bil}_{R}(A\times B,C_i)$ for $i=1,\cdots,n$. Then, the map $(f_1,\cdots,f_n)\colon A\times B\rightarrow C_1\times\cdots\times C_n$ defined by $(f_1,\cdots,f_n)(a,b)=(f_1(a,b),\cdots,f_n(a,b))$  belongs to $\mathpzc{Bil}_R(A\times B,C_1\times\cdots\times C_n)$. Moreover, if at least one of the $f_i$'s is non-degenerate, then $(f_1,\cdots,f_n)$ itself is non-degenerate.
\end{lemma}
Let us study the group structure of $\mathpzc{Bil}(A\times B,C)$ in an easy case. 
For every group  $G$, let $\mathpzc{End}(G)=\mathpzc{Hom}(G,G)$ which is a ring when $G$ is abelian. Let $a,b$ be two positive integers. Then, we have the following sequence of group isomorphisms $\mathpzc{Bil}(\mathbb{Z}_a\times\mathbb{Z}_b,\mathbb{Z}_{(a,b)})\cong \mathpzc{Hom}(\mathbb{Z}_{a}\otimes\mathbb{Z}_b,\mathbb{Z}_{(a,b)})\cong \mathpzc{End}(\mathbb{Z}_{(a,b)})$. It easy to check that $\mathpzc{End}(\mathbb{Z}_n)\cong\mathbb{Z}_n$ as rings for any $n$. So that $\mathpzc{Bil}(\mathbb{Z}_a\times\mathbb{Z}_b,\mathbb{Z}_{(a,b)})$ may also be equipped with a structure of commutative ring with a unit isomorphic to $\mathbb{Z}_{(a,b)}$. Moreover, as a cyclic group of order $(a,b)$, and therefore as a $\mathbb{Z}$-module, $\mathpzc{Bil}(\mathbb{Z}_a\times\mathbb{Z}_b,\mathbb{Z}_{(a,b)})$ is generated by the canonical bilinear map $\otimes$. Thus $\mathpzc{Bil}(\mathbb{Z}_a\times\mathbb{Z}_b,\mathbb{Z}_{(a,b)})$ is the free $\mathbb{Z}_{(a,b)}$-module generated by $\otimes$, or, in other terms, it is isomorphic to the group $\mathbb{Z}_{(a,b)}$, so that for any bilinear map $f\colon \mathbb{Z}_{a}\times\mathbb{Z}_b\rightarrow \mathbb{Z}_{(a,b)}$, there exists a unique $k_f\in\mathbb{Z}_{(a,b)}$ such that $f=\otimes^{k_f}$, where we recall that $\otimes^{k_f}(x\bmod a,y\bmod b)=k_f xy\bmod (a,b)$. It follows that if $\otimes$ is non-degenerate, then $f$ is a pairing if, and only if, $(k_f,(a,b))=1$. Moreover, if $\otimes$ is degenerate, then there is no pairing defined on $\mathbb{Z}_a\times\mathbb{Z}_b$ by lemma~\ref{degeneracy-of-can-implies-no-pairings}. Thus, according to lemma~\ref{nondegeneracy1}, a pairing $f\in \mathpzc{Bil}(\mathbb{Z}_a\times\mathbb{Z}_a,\mathbb{Z}_{a})$ is exactly a   generator of the cyclic group $\mathpzc{Bil}(\mathbb{Z}_a\times\mathbb{Z}_a,\mathbb{Z}_{a})$ of order $a$.  The following result is proved.
\begin{theorem}\label{grpofunits}
The set of pairings from $\mathbb{Z}_{a}\times \mathbb{Z}_a$ to $\mathbb{Z}_{a}$ forms a group isomorphic to the group of invertible elements of the ring $\mathbb{Z}_{a}$ under multiplication. In particular, there are exactly $\phi(a)$ pairings in this situation, and if $f\in \mathpzc{Bil}(\mathbb{Z}_a\times\mathbb{Z}_a,\mathbb{Z}_{a})$ is a pairing, then any other pairing $g$ has the form $f^{k_g}$, for a unique $k_g\in\mathbb{Z}_{a}$ invertible modulo $a$. Moreover, if $p$ is a prime number, then the group of pairings in $\mathpzc{Bil}(\mathbb{Z}_p\times\mathbb{Z}_p,\mathbb{Z}_{p})$ is isomorphic to $\mathbb{Z}_{p}^*$. 
\end{theorem}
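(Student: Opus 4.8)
The plan is to package the observations already made in the paragraph preceding the statement, the crucial one being that $\mathpzc{Bil}(\mathbb{Z}_a\times\mathbb{Z}_a,\mathbb{Z}_a)$ is a commutative ring with unit isomorphic to $\mathbb{Z}_a$, in which the canonical bilinear map $\otimes$ is the multiplicative unit and every element is uniquely of the form $\otimes^{k}$ with $k\in\mathbb{Z}_a$ (the pointwise power, so that $\otimes^{k}(x\bmod a,y\bmod a)=kxy\bmod a$). The whole theorem then reduces to identifying the pairings with the group of units of this ring.

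First I would note that, since here $a=b$, lemma~\ref{nondegeneracy1} guarantees that $\otimes$ is non-degenerate. Consequently, by the discussion preceding the statement, a bilinear map $f=\otimes^{k_f}$ is a pairing if, and only if, $(k_f,a)=1$, i.e.\ if and only if $k_f$ is an invertible element of the ring $\mathbb{Z}_a$. Under the ring isomorphism $\mathpzc{Bil}(\mathbb{Z}_a\times\mathbb{Z}_a,\mathbb{Z}_a)\cong\mathbb{Z}_a$ sending $\otimes^{k}$ to $k$, the pairings therefore correspond exactly to the units of $\mathbb{Z}_a$. Hence the set of pairings is precisely the group of units of the ring $\mathpzc{Bil}(\mathbb{Z}_a\times\mathbb{Z}_a,\mathbb{Z}_a)$, which is isomorphic to $\mathbb{Z}_a^{*}$ and in particular has cardinality $\phi(a)$. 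Closure deserves a line: if $f=\otimes^{m}$ and $g=\otimes^{n}$ are pairings, their ring product $\otimes^{mn}$ satisfies $(mn,a)=1$ and the ring inverse of $\otimes^{m}$ is $\otimes^{m^{-1}}$, both again pairings, so the units really do form a group.

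For the remaining assertion I would fix a pairing $f=\otimes^{m}$, so $(m,a)=1$, and let $g=\otimes^{n}$ be any other pairing, whence $(n,a)=1$. Setting $k_g=nm^{-1}\bmod a$, which is invertible modulo $a$ because both $n$ and $m^{-1}$ are, the pointwise power satisfies $f^{k_g}=\otimes^{mk_g}=\otimes^{n}=g$, and $k_g$ is the unique such exponent since $m$ is invertible modulo $a$. The statement for a prime $p$ is then the instance $a=p$, for which $\mathbb{Z}_p^{*}$ is the full multiplicative group of the field $\mathbb{Z}_p$, of order $p-1=\phi(p)$.

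The only genuinely delicate point, and the one I would flag explicitly, is that two a priori different operations are in play and must be kept apart: the group of pairings is a group under the \emph{ring} multiplication (where $\otimes^{k}\cdot\otimes^{l}=\otimes^{kl}$), whereas the relation $g=f^{k_g}$ is expressed through the \emph{pointwise} power coming from the $\mathbb{Z}$-module structure. What reconciles the two, and makes all the descriptions coincide, is that $\otimes$ is simultaneously the multiplicative unit of the ring and a generator of the underlying cyclic group of order $a$; once this is observed, everything else is bookkeeping on exponents modulo $a$.
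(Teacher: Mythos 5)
Your proposal is correct and follows essentially the same route as the paper, which proves this theorem entirely in the paragraph preceding its statement: identify $\mathpzc{Bil}(\mathbb{Z}_a\times\mathbb{Z}_a,\mathbb{Z}_a)\cong\mathpzc{End}(\mathbb{Z}_a)\cong\mathbb{Z}_a$ as a ring generated by $\otimes$, invoke lemma~\ref{nondegeneracy1} for non-degeneracy of $\otimes$, and characterize pairings as the maps $\otimes^{k}$ with $(k,a)=1$, i.e.\ the units. Your added remarks on closure, on the explicit exponent $k_g=nm^{-1}\bmod a$, and on distinguishing the ring multiplication from the pointwise $\mathbb{Z}$-module power are sound clarifications of the same argument rather than a different approach.
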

\begin{remark}
Let $p$ be a prime number. Let $f\in \mathpzc{Bil}(\mathbb{Z}_p\times\mathbb{Z}_p,\mathbb{Z}_p)$ be a pairing. According to theorem~\ref{grpofunits}, any other pairing is given by $f^k$ for $k\in\mathbb{Z}_p^*$ as it was already noticed in~\cite{Boxall} (but we observe that the underlying group structure on pairings was not explicitly mentioned). In this situation, the integer $k$ was called the \emph{logarithm} of the pairing \emph{to the base $f$}. This also explains why F. Vercauteren write in~\cite{Vercauteren} that ``there is essentially only one pairing''. 
\end{remark}
Let $A\cong \bigoplus_{p\in P}A(p)$ and $B\cong \bigoplus_{p\in P}B(p)$ be two finite abelian groups decomposed following the primary decomposition (each $A(p)$ and $B(p)$ are finite abelian $p$-groups).  For each prime number $p$, $\mathpzc{Bil}(A(p)\times B(p),A(p)\otimes B(p))\cong \mathpzc{End}(A(p)\otimes B(p))$ so that $\mathpzc{Bil}(A(p)\times B(p),A(p)\otimes B(p))$ admits a ring structure. From the discussion preceding theorem~\ref{prim-decomp-pairing}, we know that $\mathpzc{Bil}(A\times B,A\otimes B)\cong \bigoplus_{p\in P}\mathpzc{Bil}(A(p)\times B(p),A(p)\otimes B(p))$ (group direct sum). Let us assume that for each prime number $p$, $A(p)=\mathbb{Z}_{p^{n_p}}=B(p)$ (the case $n_p=0$ is necessarily possible in such a way $A(p)=(0)=B(p)$). Then, according to theorem~\ref{grpofunits}, for each $p$, the pairings in $\mathpzc{Bil}(A(p)\times B(p),A(p)\otimes B(p))\cong \mathbb{Z}_{p^{n_p}}$ form the group $\mathbb{Z}_{p^{n_p}}^{\times}$ of invertible elements modulo $p^{n_p}$, and by theorem~\ref{prim-decomp-pairing} the pairings in $\mathpzc{Bil}(A\times B,A\otimes B)\cong \bigoplus_{p\in P}\mathbb{Z}_{p^{n_p}}$ is the group direct sum $\bigoplus_{p\in P}\mathbb{Z}_{p^{n_p}}^{\times}$. Again by theorem~\ref{grpofunits}, if $n_p\in \{\, 0,1\,\}$ for every prime number $p$, then the pairings in $\mathpzc{Bil}(A\times B,A\otimes B)\cong \bigoplus_{p\in P}\mathbb{Z}_{p^{n_p}}$ is the group direct sum $\bigoplus_{p\in P_0}\mathbb{Z}_{p}^*$ (where $P_0=\{\, p\in P\colon n_p=1\,\}$). 

\subsection{Tensor product of linear maps}

One of the main feature of the tensor product that has not been used yet in this contribution  is the fact $\otimes_R$ is a bifunctor. In particular, it transforms a pair of linear maps into one linear map as follows. Let $A_1,A_2,B_1,B_2$ be four $R$-modules. Let $f\colon A_1\rightarrow A_2$, $g\colon B_1\rightarrow B_2$ be two $R$-linear maps. Then, the map $f\otimes g\colon A_1\otimes_R B_1\rightarrow A_2\otimes_R B_2$ defined by $(f\otimes g)(a\otimes b)=f(a)\otimes g(b)$ is a $R$-module map (be careful that the same symbol $\otimes$ denotes the canonical bilinear map $A_i\times B_i\rightarrow A_i\otimes_R B_i$ for $i=1,2$). Since $\mathpzc{Bil}_{R}(A_1\times B_1,A_2\otimes_R B_2)\cong \mathpzc{Hom}_R(A_1\otimes_R B_1,A_2\otimes_R B_2)$, it follows that $f\otimes g$ is induced by a (unique) $R$-bilinear map $h\colon A_1\times B_1\rightarrow A_2\otimes_R B_2$ such that $h(a,b)=f(a)\otimes g(b)$ for $a\in A_1$, $b\in B_1$. 

\begin{lemma}
Let us assume that $f$ is a  monomorphism, $g$ is an epimorphism, and that the canonical $R$-bilinear map $\otimes\colon A_2\times B_2\rightarrow A_2\otimes_R B_2$ is non-degenerate, then $h$ is left non-degenerate.
\end{lemma}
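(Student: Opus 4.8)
The plan is to unwind the definition of left non-degeneracy directly. Recall that $h$ is left non-degenerate precisely when the assignment $a\mapsto h(a,\cdot)$ is one-to-one, equivalently: whenever $a_0\in A_1$ satisfies $h(a_0,b)=0_{A_2\otimes_R B_2}$ for every $b\in B_1$, we must have $a_0=0_{A_1}$. So I would fix such an $a_0$ and chase it through a short chain of three implications, each using exactly one of the three hypotheses.

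First I would translate the assumption $h(a_0,b)=0$ for all $b\in B_1$ into the statement $f(a_0)\otimes g(b)=0_{A_2\otimes_R B_2}$ for all $b\in B_1$, using the defining formula $h(a,b)=f(a)\otimes g(b)$. Next, since $g$ is an epimorphism, as $b$ ranges over $B_1$ the element $g(b)$ ranges over the whole of $B_2$; hence the previous line upgrades to $f(a_0)\otimes b'=0_{A_2\otimes_R B_2}$ for \emph{every} $b'\in B_2$. This is exactly the condition that the fixed element $f(a_0)\in A_2$ lies in the left kernel of the canonical bilinear map $\otimes\colon A_2\times B_2\rightarrow A_2\otimes_R B_2$. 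By the hypothesis that this canonical bilinear map is non-degenerate (in particular, left non-degenerate), it follows that $f(a_0)=0_{A_2}$. Finally, since $f$ is a monomorphism, $f(a_0)=0_{A_2}=f(0_{A_1})$ forces $a_0=0_{A_1}$, which is the desired conclusion.

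There is no genuine obstacle here; the argument is a routine composition of the three assumptions, and the only step that requires a moment's attention is the use of surjectivity of $g$. That step is what guarantees that $f(a_0)$ annihilates \emph{all} of $B_2$ rather than merely the image $g(B_1)$, so that the left non-degeneracy of the \emph{target} canonical map can legitimately be invoked. I would also remark that monomorphicity of $f$ is used only at the very end, to transfer the vanishing of $f(a_0)$ back to the vanishing of $a_0$, and that the symmetric statement (right non-degeneracy of $h$ under the dual hypotheses that $f$ is an epimorphism and $g$ a monomorphism) follows by the evident mirror argument together with lemma~\ref{symmetric}.
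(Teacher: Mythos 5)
Your proof is correct and follows exactly the same chain of implications as the paper's own argument: translate $h(a_0,b)=0$ into $f(a_0)\otimes g(b)=0$, use surjectivity of $g$ to pass to all of $B_2$, invoke left non-degeneracy of the canonical map to get $f(a_0)=0$, and conclude by injectivity of $f$. Nothing to change.
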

\begin{proof}
Let $a\in A_1$ such that for every $b\in B_1$, $h(a,b)=0$. Then, $f(a)\otimes g(b)=0$ for every $b\in B$. Since $g$ is onto, $f(a)\otimes b^{\prime}=0$ for all $b^{\prime}\in B_2$. Since the canonical bilinear map is non-degenerate, $f(a)=0_{A_2}$, so that $a=0_{A_1}$ because $f$ is one-to-one. \qed
\end{proof}
\subsection{Divide out the kernels}

In this subsection is presented a natural way to construct a pairing from a bilinear map by dividing out two kernels. 

Let $A,B,C$ be three $R$-modules (where $R$ is a commutative ring with a unit). The groups $A,B$ are written additively, while $C$ is given in multiplicative notation. Let $f\colon A\times B\rightarrow C$ be a $R$-bilinear map. We define two linear maps $\gamma_f\colon A\rightarrow \mathpzc{Hom}_R(B,C)$ and $\rho_f\colon B\rightarrow \mathpzc{Hom}_R(A,C)$ given respectively by $\gamma_f(a)=f(a,\cdot)$ and $\rho_f(b)=f(\cdot,b)$. Let us define $L_f=\bigcap_{b\in B}\ker f(\cdot,b)=\ker \gamma_f$, $R_f=\bigcap_{a\in A}\ker f(a,\cdot)=\ker \rho_f$ which are respectively a sub-module of $A$ and a sub-module of $B$ (they are sometimes called the \emph{annihilator} of $A$ and $B$ respectively, see~\cite{MacLane2}). We observe that if $a-a^{\prime}\in L_f$, then for all $b\in B$, $f(a,b)f(a^{\prime},b)^{-1}=f(a-a^{\prime},b)=1_C$ so that $f(a,b)=f(a^{\prime},b)$. Therefore, there is a well-defined $R$-bilinear map $f_1\colon A/L_f\times B\rightarrow C$ such that $f_1(a\bmod L_f,b)=f(a,b)$ for all $a\in A$, $b\in B$. Similarly, we have a well-defined $R$-linear map $f_2\colon A\times B/R_f\rightarrow C$ such that $f_2(a,b\bmod R_f)=f(a,b)$ for all $a\in A$, $b\in B$. The first map is left non-degenerate while the second is right non-degenerate. We may continue the process in order to get a full non-degeneracy. Let $R_{f_1}=\bigcap_{a\bmod L_f\in A/L_f}\ker f_1(a\bmod L_f,\cdot)=R_f$. Similarly we have $L_{f_2}=L_f$. We obtain two well-defined non-degenerate $R$-bilinear maps $f_3,f_4\colon A/L_f\times B/R_f\rightarrow C$ such that 
\begin{equation}
\begin{array}{lll}
f_3(a\bmod L_f,b\bmod R_f)&=&f_1(a\bmod L_f,b)\\
&=&f(a,b)\\
&=&f_2(a,b\bmod R_f)\\
&=&f_4(a\bmod L_f,b\bmod R_f)
\end{array}
\end{equation} 
for each $a\in A$ and $b\in B$. Thus the two pairings are the same one.

When the bilinear map $f$ into consideration is the canonical bilinear map $\otimes \colon A\times B\rightarrow A\otimes_R B$ itself, then we define ${}^{\perp}B=L_{\otimes}$, and $A^{\perp}=R_{\otimes}$. Moreover, let $\lambda\colon A\rightarrow A/{}^{\perp}B$ and $\delta\colon B\rightarrow B/A^{\perp}$ be the canonical epimorphisms. We have a well-defined non-degenerate pairing $\otimes^{\prime}\colon A/{}^{\perp}B\times B/A^{\perp}\rightarrow A\otimes_R B$ such that $\lambda(a)\otimes^{\prime}\delta(b)=a\otimes b$ for every $a\in A$, $b\in B$. Let us define $\tilde{\otimes}=(\lambda\otimes \delta)\circ \otimes^{\prime}\in \mathpzc{Bil}_R(A/{}^{\perp}B\times B/A^{\perp},A/{}^{\perp}B\otimes_R B/A^{\perp})$. It satisfies $(\lambda\otimes \delta)(\lambda(a)\otimes^{\prime}\delta(b))=(\lambda\otimes \delta)(a\otimes b)=\lambda(a)\otimes_2\delta(b)$ for every $a\in A$, $b\in B$, where $\otimes_2\colon A/{}^{\perp}B\times B/A^{\perp}\rightarrow A/{}^{\perp}B\otimes_R B/A^{\perp}$ is the canonical bilinear map. Actually it is quite clear that  $\tilde{\otimes}=\otimes_2$. 
\begin{lemma}
The canonical bilinear map $\otimes_2\colon A/{}^{\perp}B\times B/A^{\perp}\rightarrow A/{}^{\perp}B\otimes_R B/A^{\perp}$ is non-degenerate.
\end{lemma}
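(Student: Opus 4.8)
The plan is to reduce the statement to the non-degeneracy of $\otimes'$, which was already established, by proving that the canonical epimorphism $\lambda\otimes\delta\colon A\otimes_R B\rightarrow A/{}^{\perp}B\otimes_R B/A^{\perp}$ is in fact an \emph{isomorphism}. Granting this, the conclusion is immediate. From the computation preceding the statement we have the factorization $\otimes_2=\tilde{\otimes}=(\lambda\otimes\delta)\circ\otimes'$, so $\otimes_2$ is the composite of the non-degenerate bilinear map $\otimes'$ with an injective linear map, and such a composite is again non-degenerate. Concretely, if $\otimes_2(\lambda(a),\delta(b))=0$ for every $b\in B$ (recall $\delta$ is onto, so this ranges over all of $B/A^{\perp}$), then $(\lambda\otimes\delta)(\otimes'(\lambda(a),\delta(b)))=0$ for every $b$; injectivity of $\lambda\otimes\delta$ forces $\otimes'(\lambda(a),\delta(b))=0$ for every $b$, and left non-degeneracy of $\otimes'$ then yields $\lambda(a)=0$. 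The symmetric argument gives right non-degeneracy.

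It therefore remains to show that $\lambda\otimes\delta$ is an isomorphism, and this is the heart of the matter. To produce an inverse I would apply the universal property of $A\otimes_R B$ to the $R$-bilinear map $\otimes'\colon A/{}^{\perp}B\times B/A^{\perp}\rightarrow A\otimes_R B$ constructed just above: since $\otimes'$ is a genuine $R$-bilinear map with values in the $R$-module $A\otimes_R B$, this yields a unique $R$-linear map $\psi\colon A/{}^{\perp}B\otimes_R B/A^{\perp}\rightarrow A\otimes_R B$ such that $\psi(\lambda(a)\otimes\delta(b))=\lambda(a)\otimes'\delta(b)=a\otimes b$ for all $a\in A$, $b\in B$.

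Then I would check that $\psi$ and $\lambda\otimes\delta$ are mutually inverse by evaluating the two composites on generators. On the one hand, $\psi\big((\lambda\otimes\delta)(a\otimes b)\big)=\psi(\lambda(a)\otimes\delta(b))=a\otimes b$, and since the elementary tensors $a\otimes b$ span $A\otimes_R B$ (lemma~\ref{spanning}) we get $\psi\circ(\lambda\otimes\delta)=\mathrm{id}$. On the other hand, $(\lambda\otimes\delta)\big(\psi(\lambda(a)\otimes\delta(b))\big)=(\lambda\otimes\delta)(a\otimes b)=\lambda(a)\otimes\delta(b)$, and because $\lambda,\delta$ are onto the tensors $\lambda(a)\otimes\delta(b)$ span $A/{}^{\perp}B\otimes_R B/A^{\perp}$ (again by lemma~\ref{spanning}), so $(\lambda\otimes\delta)\circ\psi=\mathrm{id}$. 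Hence $\lambda\otimes\delta$ is an isomorphism, which completes the argument.

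I do not expect a genuine obstacle here: the essential content is the clean factorization $\otimes_2=(\lambda\otimes\delta)\circ\otimes'$ together with the fact that dividing out the annihilators ${}^{\perp}B$ and $A^{\perp}$ leaves the tensor product unchanged. The only point requiring a little care is the well-definedness of $\psi$, namely verifying that $\otimes'$ really is an $R$-bilinear map into $A\otimes_R B$ so that the universal property applies; but this is exactly what was recorded when $\otimes'$ was introduced, and everything else is a formal manipulation of generating tensors.
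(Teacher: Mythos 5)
Your proof is correct, but it takes a genuinely different route from the paper's. The paper disposes of the lemma in two lines: $\otimes^{\prime}$ is a pairing defined on $(A/{}^{\perp}B)\times(B/A^{\perp})$ with values in the module $A\otimes_R B$, and lemma~\ref{degeneracy-of-can-implies-no-pairings} states precisely that the existence of \emph{any} pairing on a given cartesian product of modules forces the canonical bilinear map on that product to be non-degenerate (since every bilinear map factors through the canonical one, any degeneracy of $\otimes_2$ would be inherited by $\otimes^{\prime}$). You instead prove the stronger statement that the comparison map $\lambda\otimes\delta\colon A\otimes_R B\rightarrow (A/{}^{\perp}B)\otimes_R(B/A^{\perp})$ is an isomorphism, manufacturing an explicit inverse $\psi$ out of $\otimes^{\prime}$ and then transporting non-degeneracy across the factorization $\otimes_2=(\lambda\otimes\delta)\circ\otimes^{\prime}$; the verification on generating tensors is sound. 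Your argument is longer but yields as a byproduct the worthwhile identification $A\otimes_R B\cong (A/{}^{\perp}B)\otimes_R(B/A^{\perp})$ (dividing out the annihilators does not change the tensor product), whereas the paper's is an immediate corollary of a lemma already in hand. One small slip of terminology: the universal property you invoke to define $\psi$ is that of the tensor product $(A/{}^{\perp}B)\otimes_R(B/A^{\perp})$ --- you are factoring the bilinear map $\otimes^{\prime}$, whose domain is $(A/{}^{\perp}B)\times(B/A^{\perp})$ --- not that of $A\otimes_R B$; the construction you describe is nevertheless the right one.
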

\begin{proof}
The bilinear map $\otimes^{\prime}\colon A/{}^{\perp}B\times B/A^{\perp}\rightarrow A\otimes_R B$ is non-degenerate. Then according to lemma~\ref{degeneracy-of-can-implies-no-pairings}, the canonical bilinear map $\otimes_2\colon \colon A/{}^{\perp}B\times B/A^{\perp}\rightarrow A/{}^{\perp}B\otimes_R B/A^{\perp}$ is itself non-degenerate. \qed
\end{proof}

\subsection{Finite abelian group duality and characters}\label{duality}

Let $A,B,C$ be three $R$-modules. One of the main property of a given pairing $\langle\cdot\mid\cdot\rangle\colon A\times B\rightarrow C$ is the non-degeneracy. It exactly states that  $A$ embeds into $\mathpzc{Hom}_R(B,C)$ as a sub-module by $\langle a\mid \cdot\rangle\colon B\rightarrow C$ for each $a\in A$, and that $B$ embeds into $\mathpzc{Hom}_R(A,C)$ also as a sub-module by $\langle\cdot\mid b\rangle\colon A\rightarrow C$ for each $b\in B$. Using this idea we may construct a  pairing. Let $A,C$ be two $R$-modules, and let $B$ be a sub-module of $\mathpzc{Hom}_R(A,C)$. Let $\langle\cdot\mid\cdot\rangle\colon A\times B\rightarrow C$ be defined by $\langle a\mid b\rangle=b(a)$ for every $a\in A$, $b\in B$. By its very definition, this is a $R$-bilinear map which clearly is right non-degenerate. We observe that the elements of $A$ may be seen as linear maps on $B$ as follows: let $a\in A$, and define $\widehat{a}\colon B\rightarrow C$ by $\widehat{a}(b)=b(a)$. The facts that $\widehat{a}\in \mathpzc{Hom}_R(B,C)$  and  $\widehat{(\cdot)}\colon a\in A \rightarrow \widehat{a}\in \mathpzc{Hom}_R(B,C)$ is a homomorphism of groups are easily checked. We say that $A$ \emph{seperates the points of $B$} (following a usual terminology from functional analysis) if $b(a)=0_C$ for every $b\in B$ implies that $a=0_A$. Equivalently, this means that the map $\widehat{a}=\langle a\mid \cdot\rangle$ is one-to-one for every non-zero $a$, so that $A$ embeds into $\mathpzc{Hom}_R(B,C)$ as a sub-module. In this case, and only in this case, $\langle\cdot\mid\cdot\rangle$ as defined above is a pairing. We now propose two actual examples of such a construction. \\

\noindent\textbf{Dot-product construction: } Let $\mathbb{K}$ be any field. Let $V$ be a $d$-dimensional vector space over $\mathbb{K}$. Its \emph{(algebraic) dual} $V^*$ is the vector space $\mathpzc{Hom}_{\mathbb{K}}(V,\mathbb{K})$ of all linear forms. We observe that $V$ separates the points of $V^*$ since if $v\in V$ is non-zero, then it belongs to some basis of $V$ over $\mathbb{K}$ so that we may choose a linear map $\ell\colon V\rightarrow \mathbb{K}$ such that $\ell(v)\not=0$ and $\ell$ takes any value for the other elements of the basis. Therefore the $\mathbb{K}$-bilinear form $\langle\cdot\mid \cdot\rangle\colon V\times V^*\rightarrow \mathbb{K}$ given by $\langle v\mid \ell\rangle=\ell(v)$ is a pairing. Moreover, if $(e_i)_{i=1}^d$ is a basis of $V$ over the base field, then for each $j=1,\cdots,d$, we may define a linear form $e^j\in V^*$ by the relations $e^j(e_i)=1$ if $j=i$, and $0$ otherwise. It turns that $(e^i)_{i=1}^d$ is a basis of $V^*$ over $\mathbb{K}$ called the \emph{dual basis} of $(e_i)_{i=1}^d$, and that $V\cong V^*$ (as vector spaces). Under the isomorphism $e^i\mapsto e_i$, the pairing becomes $\langle v\mid w\rangle=\sum_{i=1}^d v_i w_i$, where $v_i=e^i(v)$, $w_i=e^i(w)$ for each $i=1,\cdots,d$, and we recover the usual dot-product of $\mathbb{K}^d$. 
\begin{remark}
The above construction works in particular when $\mathbb{K}$ is the finite field $\mathbb{F}_{p^n}$ with $p^n$ elements of characteristic $p$. In this case, any finite-dimensional vector space is actually finite, and we obtain a pairing between finite spaces (and therefore finite abelian groups). When $n=1$, we recover the construction of ``dual pairing vector space'' from~\cite{Okamoto1,Okamoto2}. 
\end{remark}

\noindent\textbf{Generalized duality of finite abelian groups: } Let $C_N$ be a cyclic group of order $N$, with generator $\gamma$. Let $A$ be any finite abelian group. A homomorphism of group $\chi\colon A\rightarrow C_N$ is called a \emph{character}. Since for every $a\in A$, $a^{|A|}=1_A$, it follows that $\chi(a)^{|A|}=1$.  Let $d$ be a divisor of $N$, and let $\chi\in\mathpzc{Hom}(\mathbb{Z}_d,C_N)$. Since for every $x$, $\chi(x\bmod d)^d=1$, it follows that $\mathsf{im}(\chi)$ is a subgroup of the unique cyclic subgroup $C_d$ of $C_N$ of order $d$. Therefore, $\chi(x\bmod d)=\gamma^{\frac{N}{d}j}$ for some $j=0,\cdots,d-1$ that depends on both $\chi$ and $x$. In particular, we have $\chi(1)=\gamma^{\frac{N}{d}i}$ for some $i$, and then, $\chi(x\bmod d)=\chi(x 1)=\chi(1)^x=\gamma^{\frac{N}{d}i x\bmod d}$. This means that all characters of $\mathbb{Z}_d$ have the form $\chi_i\colon \mathbb{Z}_d\rightarrow C_N$ with $\chi_i(x\bmod d)=\gamma^{\frac{N}{d}ix}$ for $i=1,\cdots,d$. It is not difficult to check that $\Psi\colon \mathbb{Z}_d\rightarrow \mathpzc{Hom}(\mathbb{Z}_d,C_N)$ given by $\Psi(i)=\chi_i$ is a group isomorphism. (Such a generalized approach for group characters has been used in~\cite{Poinsot} for other purposes.)

\begin{lemma}\label{cas-binaire}
Let $d_1,d_2$ be two divisors of $N$. Then, $\mathbb{Z}_{d_1}\times\mathbb{Z}_{d_2}$ and $\mathpzc{Hom}(\mathbb{Z}_{d_1}\times\mathbb{Z}_{d_2},C_N)$ are isomorphic. 
\end{lemma}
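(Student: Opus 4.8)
The plan is to reduce the statement to the single-divisor case already settled in the discussion preceding the lemma, namely the isomorphism $\Psi\colon \mathbb{Z}_d\cong \mathpzc{Hom}(\mathbb{Z}_d,C_N)$ valid for \emph{every} divisor $d$ of $N$. The bridge between the two cases is the standard fact that the contravariant functor $\mathpzc{Hom}(-,C_N)$ turns a finite direct sum appearing in its first argument into a direct product. Concretely, I would first establish the isomorphism
\[
\mathpzc{Hom}(\mathbb{Z}_{d_1}\times\mathbb{Z}_{d_2},C_N)\cong \mathpzc{Hom}(\mathbb{Z}_{d_1},C_N)\times\mathpzc{Hom}(\mathbb{Z}_{d_2},C_N),
\]
and then apply $\Psi$ to each factor, which is legitimate precisely because both $d_1$ and $d_2$ divide $N$ by hypothesis.

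To justify the decomposition of the $\mathpzc{Hom}$ group, I would use the universal property of the direct sum (equivalently, observe that a finite direct sum of abelian groups is a biproduct). Writing $\iota_1\colon \mathbb{Z}_{d_1}\rightarrow \mathbb{Z}_{d_1}\times\mathbb{Z}_{d_2}$ and $\iota_2\colon \mathbb{Z}_{d_2}\rightarrow \mathbb{Z}_{d_1}\times\mathbb{Z}_{d_2}$ for the canonical injections, the map sending $\chi\in\mathpzc{Hom}(\mathbb{Z}_{d_1}\times\mathbb{Z}_{d_2},C_N)$ to the pair $(\chi\circ\iota_1,\chi\circ\iota_2)$ is a group homomorphism, and its inverse sends $(\chi_1,\chi_2)$ to the character $(x,y)\mapsto \chi_1(x)\chi_2(y)$ (in the multiplicative notation of $C_N$). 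A short verification shows these two maps are mutually inverse group homomorphisms; this is entirely routine and I would not belabour it.

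Composing the two isomorphisms then gives
\[
\mathpzc{Hom}(\mathbb{Z}_{d_1}\times\mathbb{Z}_{d_2},C_N)\cong \mathbb{Z}_{d_1}\times\mathbb{Z}_{d_2},
\]
which is the claim. There is essentially no hard part here: the only point demanding attention is that the single-divisor isomorphism $\Psi$ relied on $d$ dividing $N$ (so that every character of $\mathbb{Z}_d$ lands in the unique cyclic subgroup $C_d$ of $C_N$), and this is exactly guaranteed for each of $d_1,d_2$ by the hypothesis that they divide $N$. I note that the same argument works verbatim for an arbitrary finite direct sum $\bigoplus_{i=1}^n\mathbb{Z}_{d_i}$ of cyclic groups whose orders $d_i$ all divide $N$, yielding $\mathpzc{Hom}(\bigoplus_{i=1}^n\mathbb{Z}_{d_i},C_N)\cong\bigoplus_{i=1}^n\mathbb{Z}_{d_i}$; this more general form is what feeds into the construction of the pairing invoked in the proof of Theorem~\ref{tensor-square}.
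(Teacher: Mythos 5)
Your proof is correct and follows essentially the same route as the paper: it decomposes $\mathpzc{Hom}(\mathbb{Z}_{d_1}\times\mathbb{Z}_{d_2},C_N)$ via the canonical injections into $\mathpzc{Hom}(\mathbb{Z}_{d_1},C_N)\times\mathpzc{Hom}(\mathbb{Z}_{d_2},C_N)$ (with the same explicit inverse $(\chi_1,\chi_2)\mapsto((x,y)\mapsto\chi_1(x)\chi_2(y))$) and then invokes the single-divisor isomorphism $\mathpzc{Hom}(\mathbb{Z}_{d_i},C_N)\cong\mathbb{Z}_{d_i}$ established just before the lemma. The closing remark about the general finite direct sum is also exactly how the paper proceeds after the lemma.
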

\begin{proof}
The proof is easy since it suffices to observe that $\mathpzc{Hom}(\mathbb{Z}_{d_1}\times\mathbb{Z}_{d_2},C_N)$ and $\mathpzc{Hom}(\mathbb{Z}_{d_1},C_N)\times \mathpzc{Hom}(\mathbb{Z}_{d_2},C_N)$ are isomorphic since we already know that $\mathpzc{Hom}(\mathbb{Z}_{d_i},C_N)$ is isomorphic to $\mathbb{Z}_{d_i}$ for $i=1,2$. Let $q_i\colon \mathbb{Z}_{d_i}\rightarrow \mathbb{Z}_{d_1}\times\mathbb{Z}_{d_2}$ be the canonical injection for $i=1,2$. Let us define the homomorphism of groups $\Phi\colon \mathpzc{Hom}(\mathbb{Z}_{d_1}\times\mathbb{Z}_{d_2},C_N)\rightarrow \mathpzc{Hom}(\mathbb{Z}_{d_1},C_N)\times \mathpzc{Hom}(\mathbb{Z}_{d_2},C_N)$ by $\Phi(\chi)=(\chi\circ i_1,\chi\circ i_2)$ which is obviously one-to-one. For $\chi^{(i)}\in \mathpzc{Hom}(\mathbb{Z}_{d_i},C_N)$, $i=1,2$, the map $\chi\colon (x_1,x_2)\mapsto \chi^{(1)}(x_1)\chi^{(2)}(x_2)$ belongs to  $ \mathpzc{Hom}(\mathbb{Z}_{d_1}\times\mathbb{Z}_{d_2},C_N)$, and $\Phi(\chi)=(\chi^{(1)},\chi^{(2)})$. \qed
\end{proof}
From lemma~\ref{cas-binaire} (and its proof), it is easy to see that $\mathpzc{Hom}(\bigoplus_{i=1}^m\mathbb{Z}_{d_i}^{m_i},C_N)\cong \bigoplus_{i=1}^{m}\mathpzc{Hom}(\mathbb{Z}_{d_i},C_N)^{m_i}\cong \bigoplus_{i=1}^m\mathbb{Z}_{d_i}^{m_i}$ for every divisor $d_i$ of $N$ and every integer $m_i$, $i=1,\cdots,m$. For each $i=1,\cdots,m$, and  $x=(x_1,\cdots,x_{m_i}), y=(y_1,\cdots,y_{m_i})\in \mathbb{Z}_{d_i}^{m_i}$, we define a \emph{dot-product} $$x\cdot y=\sum_{j=1}^{m_i}(x_j y_j)\bmod d_j\ .$$ Therefore, an isomorphism from $\bigoplus_{i=1}^m\mathbb{Z}_{d_i}^{m_i}$ to $\mathpzc{Hom}(\bigoplus_{i=1}^m\mathbb{Z}_{d_i}^{m_i},C_N)$ may be given by $\Psi(a^{(1)},\cdots,a^{(m)})=\chi_{a^{(1)},\cdots,a^{(m)}}$ for each $a^{(i)}\in \mathbb{Z}_{d_i}^{m_i}$, $i=1,\cdots,m$, where $$\chi_{a^{(1)},\cdots,a^{(m)}}(x^{(1)},\cdots,x^{(m)})=\prod_{i=1}^m\gamma^{\frac{N}{d_i}a^{(i)}\cdot x^{(i)}}$$ for every $(x^{(1)},\cdots,x^{(m)})\in \bigoplus_{i=1}^m\mathbb{Z}_{d_i}^{m_i}$ (so that $x^{(i)}\in\mathbb{Z}_{d_i}^{m_i}$ for each $i=1,\cdots,m$). Consequently, one obtains a bilinear map $\langle\cdot\mid\cdot\rangle\colon \bigoplus_{i=1}^m\mathbb{Z}_{d_i}^{m_i}\times \bigoplus_{i=1}^m\mathbb{Z}_{d_i}^{m_i}\rightarrow C_N$ such that $$\langle (x^{(1)},\cdots,x^{(m)})\mid (y^{(1)},\cdots,y^{(m)})\rangle=\prod_{i=1}^m\gamma^{\frac{N}{d_i}x^{(i)}\cdot y^{(i)}}$$ (where $x^{(i)},y^{(i)}\in \mathbb{Z}_{d_i}^{m_i}$, $i=1,\cdots,m$) which is right non-degenerate by construction. But this bilinear map is clearly symmetric, therefore it is actually non-degenerate and it defines a pairing. 
\begin{example}\label{exemple-de-lexemple}
Let $A=\bigoplus_{i=1}^m\mathbb{Z}_{d_i}^{m_i}$ for some integer $m$. 
\begin{enumerate}
\item Let $\gamma$ be a primitive element of the finite field $\mathbb{F}_{p^k}$ (see~\cite{Lidl}). Let us assume that $d_i$ is a divisor of $p^k-1$ for all $i=1,\cdots,m$. Then, we obtain a pairing from $A\times A$ to $\mathbb{F}_{p^k}^*$ given by 
$\langle (x^{(1)},\cdots,x^{(m)})\mid (y^{(1)},\cdots,y^{(m)})\rangle=\gamma^{\sum_{i=1}^{m}\frac{p^k-1}{d_i}(x^{(i)}\cdot y^{(i)})}$.
\item Let $\gamma=e^{\frac{2i\pi}{N}}$ be a primitive $N$-th square root of unity in the complex field. Let $d_i$ be a divisor of $N$ for each $i=1,\cdots,m$. Then, we obtain a pairing from $A\times A$ to $\mathbb{C}^*$ given by $\langle (x^{(1)},\cdots,x^{(m)})\mid (y^{(1)},\cdots,y^{(m)})\rangle=e^{\sum_{i=1}^{m}\frac{2i\pi}{d_i}(x^{(i)}\cdot y^{(i)})}$.
\end{enumerate}
\end{example}

\begin{remark}
The above construction still works when we consider usual group characters (see~\cite{Luong}) as it is shown in the second point of example~\ref{exemple-de-lexemple}. Let $A$ be an abelian group. In the classical setting a \emph{character} is a homomorphism of groups from $A$ to the multiplicative group $\mathbb{C}^*$. Torsion in $A$ implies that the image of a character belongs to the group of complex $\mathpzc{exp}(A)$-th roots of unity $C_{\mathpzc{exp}(A)}$. It is clear that for every decomposition of $A$ into a sum of cyclic groups $\displaystyle\bigoplus_{i=1}^m\mathbb{Z}_{d_i}^{m_i}$, $d_i$ divides $\mathpzc{exp}(A)$ (since $A$ contains an element of order $d_i$ for each $i$). The above machinery works. Moreover it may be recovered as follows in an abstract setting: let us denote by $\widehat{A}=\mathpzc{Hom}(A,C_{\mathpzc{exp}(A)})$ the group of characters, called \emph{dual group of $A$}. It is well-known that the double dual $\widehat{\widehat{A}}$ is naturally isomorphic to $A$. The natural bilinear map $\langle\cdot\mid\cdot\rangle\colon A\times \widehat{A}\rightarrow C_{\mathpzc{exp}(A)}$ is given by $\langle a\mid \chi\rangle=\chi(a)$. It is clearly right non-degenerate. Left non-degeneracy follows from $A\cong \widehat{\widehat{A}}$. Indeed, the isomorphism into consideration is given by $\widehat{a}(\chi)=\chi(a)$ for every $a\in A$, $\chi\in \widehat{A}$. Therefore, $\widehat{a}(\chi)=\chi(a)=1$ for every $\chi\in \widehat{A}$ implies that $\widehat{a}\equiv 1$ which is equivalent to $a=0_A$. According to lemma~\ref{degeneracy-of-can-implies-no-pairings}, this means that for every finite abelian group $A$, the canonical bilinear map $\otimes\colon A\times A\rightarrow A\otimes A$ is non-degenerate.
\end{remark}

%


\begin{thebibliography}{5}
%
\bibitem{Bajard}
Bajard, J.C.,  Imbert, L., Negre, C., and  Plantard, T.:  
Efficient multiplication in $GF(p^k)$ for elliptic curve cryptography. 
In: ARITH 16, 16th IEEE Symposium on Computer Arithmetic: 181-187, 2003

\bibitem{Baer}
Baer, R.: 
Groups with Abelian central quotient group. 
Transactions of the American Mathematical Society 44(3): 357--386, 1938. 

\bibitem{Bah}
Bahturin, Yu., Mikhalev, A.V., Petrogradsky, V.M., and Zaicev, M.V.: 
Infinite dimensional Lie superalgebras. 
Volume 7 of De Gruyter Expositions in Mathematics, 1992

\bibitem{BSS2005}
Blake, I.F., Seroussi, G., and Smart, N.P.:
Advances in elliptic curve cryptography.
London Mathematical Society, Lecture Note Series, Cambridge University Press 2005.

\bibitem{BonehF01}
Boneh, D. and Franklin, M. K.:
Identity-based encryption from the Weil pairing. 
SIAM Journal of Computing 32(3): 586--617, 2003


\bibitem{Boneh}
Boneh, D., and Silverberg, A.: 
Applications of multilinear forms to cryptography. 
Contemporary Mathematics 324: 71--90, 2003

\bibitem{BouAlg} 
Bourbaki, N.: 
Elements of mathematics - Algebra, chapters 1 to 3. 
Springer (1998)

\bibitem{Boxall}
Boxall, J., and Enge, A.: 
Some security aspects of pairing-based cryptography. 
Technical report of the ANR Project PACE, 2009

\bibitem{Cliff}
Clifford, A.H., and Preston, G.B.:
The algebraic theory of semigroups - volume 2.
Volume 7 of Mathematical Surveys and Monographs,
American Mathematical Society (1967)

\bibitem{Coron}
Coron, J.-S.,  Lepoint, T., and Tibouchi, M.: 
Practical multilinear maps over integer. 
Cryptology ePrint Archive, Report 2013/183, 21 pages, 2013

\bibitem{MacLane2}
Eilenberg S., and Mac Lane, S.: 
Group extensions and homology. 
Annals of Mathematics 43(4): 757--831, 1942

\bibitem{ElMrabet}
El Mrabet, N., Guilevic, A., and Ionica, S.: 
Efficient multiplication in finite field extensions of degree 5. 
In: Proceeding of AFRICACRYPT'11. Lecture Notes in Computer Science 6737: 188--205, 2011


\bibitem{Garg}
Garg, S., Gentry, C., and Halevi, S.: 
Candidates multilinear maps from ideal lattices. 
Cryptology ePrint Archive, Report 2012/610, 54 pages, 2012

\bibitem{Hess}
He\ss, F.: 
Pairing lattices. 
In: Proceedings of Pairing '08, Steven D. Galbraith, Kenneth G. Paterson (Eds.).
Lecture Notes in Computer Science 5209: 18--38, 2008 

\bibitem{Hessetal}
He\ss, F., Smart, N., and Vercauteren, F.: 
The Eta-pairing revisited. 
IEEE Transactions on Information Theory 53 (10): 4595--4602, 2006

\bibitem{Huang}
Huang, M.-D., and Raskind, W.: 
A multilinear generalization of the Tate pairing. In: McGuire, G., et al. (eds.) Finite Fields. Theory and Applications. Proceedings of the 9th International Conference on Finite Fields and Applications, Dublin, Ireland, July 13-17, American Mathematical Society (AMS), Providence (2009); Contemporary Mathematics 518, 255-263, 2010

\bibitem{Hungerford}
Hungerford, T.W.: 
Algebra. 
Volume 73 in the series Graduate Texts in Mathematics, Springer, 1974 

\bibitem{Joux}
Joux, A.: 
{A One Round Protocol for Tripartite Diffie-Hellman},
ANTS, LNCS 1838: 385--394, 2000.

\bibitem{JONE2008}
Joye, M., and Neven, G.:
Identity-based cryptography.
Volume 2 of Cryptology and Information Security Series, IOS Press, 2009

\bibitem{Lidl}
Lidl, R., and Niederreiter, H.:
Finite fields (2nd ed.). 
Cambridge University Press, 1997

\bibitem{Lubicz}
Lubicz, D., and Robert, D.: 
Efficient pairing computations with theta functions. In: ANTS-IX. Proceedings of the 9th International Symposium in Algorithmic Number Theory, Nancy, France, July 19-23. Lecture Notes in Computer Science 6197: 251--269, 2010
 
\bibitem{Luong}
Luong, B.: 
Fourier analysis on finite abelian groups.
Applied and Numerical Harmonic Analysis, Birkh\"auser (2009)


\bibitem{MacLane}
Mac Lane, S.:
Categories for the working mathematician.
Volume 5 of Graduate Texts in Mathematics, Springer (1971)

\bibitem{MOV1993}
Menezes, A.,  Okamoto, T.,  and Vanstone, S. A.: 
Reducing elliptic curve logarithms to logarithms in a finite field,
IEEE Transactions on Information Theory 39 (5): 1639--1646, 1993

\bibitem{Miller}
Miller, V.S.: 
The Weil pairing, and its efficient calculation. 
Journal of Cryptology 17 (4): 235--261, 2004

\bibitem{Okamoto1}
Okamoto, T., and Takashima, K.: 
Homomorphic encryption and signatures from vector decomposition. 
In: Proceedings of Pairing '08, Steven D. Galbraith, Kenneth G. Paterson (Eds.).
Lecture Notes in Computer Science 5209: 57--74, 2008 

\bibitem{Okamoto2}
Okamoto, T., and Takashima, K.: 
Hierarchical predicate encryption for inner-products. 
In: Proceedings of Asiacrypt 2009, M. Matsui (Ed.).
Lecture Notes in Computer Science 5912: 214--231, 2009 

\bibitem{Poinsot}
Poinsot, L.: 
Harmonic analysis and a bentness-like notion in certain finite Abelian groups over some finite fields.
Preprint arXiv:1304.1731, 20 pages, 2013

\bibitem{Ree}
Ree, R.: 
Generalized Lie elements.
Canadian Journal of Mathematics 12: 493--502, 1960 

\bibitem{Rothblumn}
Rothblum, R.: 
On the circular security of bit-encryption. 
In: Proceedings of 10th Theory of Cryptography Conference, TCC 2013, Amith Sahai (Ed.). 
Lecture Notes in Computer Science 7785: 579--598, 2013

\bibitem{Silverman}
Silverman, J.H.: 
The arithmetic of elliptic curves. 
Volume 106 of Graduate Texts in Mathematics, Springer (1986)

\bibitem{Scheunert}
Scheunert, M.: 
Generalized Lie algebras. 
Journal of Mathematical Physics 20(4): 712-720, 1979


\bibitem{Ruck}
R\"uck, H.-G., and Frey, G.: 
A remark concerning m-divisibility and the discrete logarithm in the divisor class group of curves. 
Mathematics of Computation 62 (206): 865--874, 1994

\bibitem{Vercauteren}
Vercauteren, F.: 
Optimal pairings. 
IEEE Transactions on Information Theorey 56(1): 455--461, 2010

\end{thebibliography}
\end{document}